\theoremstyle{plain}
\newtheorem{prop}{Proposition}[section]
\newtheorem{lem}[prop]{Lemma}
\newtheorem{thm}[prop]{Theorem}
\theoremstyle{definition}
\theoremstyle{remark}
\newcommand\N{\mathbb{N}}
\newcommand\Q{\mathbb{Q}}
\newcommand\R{\mathbb{R}}
\newcommand\Z{\mathbb{Z}}
\begin{document}
\title{Algebraic numbers, hyperbolicity, and density modulo one}
\author{A.\@ Gorodnik}
\author{S.\@ Kadyrov}

\address[AG,SK]{School of Mathematics
University of Bristol
Bristol BS8 1TW, U.K.}
\email[AG]{a.gorodnik@bristol.ac.uk}
\email[SK]{shirali.kadyrov@bristol.ac.uk}
%\keywords{???}
%\subjclass[2010]{Primary: ???, Secondary: ???}

\begin{abstract}
We prove the density of the sets of the form
$$
\{\lambda_1^m\mu_1^n\xi_1+\cdots+\lambda_k^m\mu_k^n\xi_k:\, m,n\in\mathbb{N}\}
$$
modulo one, where $\lambda_i$ and $\mu_i$ are multiplicatively independent algebraic numbers
satisfying some additional assumptions. The proof is based on analysing dynamics of higher-rank
actions on compact abelean groups.
\end{abstract}

\maketitle
%\tableofcontents

\section{Introduction}

The aim of this paper is to generalise the following theorem of B.~Kra \cite{Kra}:

\begin{thm} \label{th:kra}
Let $p_i,q_i\ge 2$, $i=1,\ldots,k$, be integers such that
\begin{enumerate}
\item[(a)] each pair $(p_i,q_i)$ is multiplicatively independent,\footnote{A pair $(\lambda,\mu)$ is called {\it multiplicatively independent} if $\lambda^m \neq
\mu^n$ for all $(m,n)\in \mathbb{Z}^2\backslash \{(0,0)\}$.}

\item[(b)] for all $i\ne j$, $(p_i,q_i)\ne (p_j,q_j)$,
\end{enumerate}
Then for all real numbers $\xi_i$, $i=1,\ldots,k$, 
with at least one of $\xi_i$'s irrational, the set
$$
\left\{\sum_{i=1}^k p_i^n q_i^m \xi_i: m,n \in \N\right\}
$$
is dense modulo one.
\end{thm}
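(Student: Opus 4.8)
The plan is to recast the problem as a statement about the dynamics of a $\mathbb{Z}^2$-action on a compact abelian group and then invoke rigidity/equidistribution results for such higher-rank actions. Concretely, I would let $\alpha$ denote the action of $\mathbb{Z}^2$ on the torus $\mathbb{T}^N$ (or an appropriate solenoidal quotient) generated by the two commuting toral endomorphisms determined by multiplication by the tuples $(p_1,\dots,p_k)$ and $(q_1,\dots,q_k)$ on the relevant coordinates; the point $\xi=(\xi_1,\dots,\xi_k)$ gives a starting point whose forward orbit closure under $\alpha$ projects, via the linear functional $x\mapsto x_1+\cdots+x_k \bmod 1$, exactly onto the closure modulo one of the set in question. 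Thus density modulo one follows once one shows the orbit $\{\alpha^{(m,n)}\xi : (m,n)\in\mathbb{N}^2\}$ is dense in $\mathbb{T}^N$, or at least that no proper closed $\alpha$-invariant subset containing this orbit can have image a proper subset of $\mathbb{T}$.

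The key steps, in order, would be: (i) build the correct group and action — since the $p_i,q_i$ are integers this is a genuine endomorphism action on a torus, but to get an action by automorphisms one passes to the inverse limit (a solenoid), on which $(p_i,q_i)$ act invertibly; (ii) verify the multiplicative independence hypothesis (a) translates into the action being genuinely rank-two, i.e.\ not virtually cyclic, so that Furstenberg-type / Berend-type rigidity applies; (iii) verify hypothesis (b), that the pairs are distinct, guarantees the action is "irreducible enough" — there is no common invariant rational subtorus on which the action degenerates, which is what prevents the orbit from being trapped in a coset where the functional $x_1+\cdots+x_k$ is constant; (iv) apply the relevant classification of closed invariant sets (Berend's theorem on commuting toral endomorphisms, or its solenoidal analogue) to conclude the orbit closure is either all of $\mathbb{T}^N$ or is contained in a finite union of rational affine subtori, and then (v) rule out the latter using irrationality of some $\xi_i$ together with (b). Finally one pushes forward under the linear functional to conclude density modulo one.

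The main obstacle I expect is step (iv)–(v): the naive Berend theorem requires an ergodicity/irreducibility hypothesis that need not hold here because the $k$ coordinates are acted on independently, so the action on $\mathbb{T}^N$ is highly reducible (it is a product-like action, one toral endomorphism pair per coordinate). One must therefore either prove a suitable generalisation handling reducible actions with the "distinct pairs" condition, or decompose the orbit-closure analysis coordinate block by coordinate block and control how the blocks can be correlated. Keeping track of the rational affine subtori that can arise, and showing that the distinctness condition (b) together with irrationality forces the projection to $\mathbb{T}$ to remain surjective even when the orbit is trapped in such a subtorus, is the delicate combinatorial-dynamical heart of the argument. A secondary technical point is the passage to the solenoid and checking that density upstairs is equivalent to density modulo one downstairs, but this is routine once the framework is set up.
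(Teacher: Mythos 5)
Your step (i)--(iii) is essentially the paper's own setup (Section \ref{sec:setting}): a product, over $i=1,\dots,k$, of solenoidal factors on which the pair $(p_i,q_i)$ acts, a starting point assembled from the $\xi_i$, and the projection $\Pi$ summing first coordinates mod $1$, so that density of the original set follows from density of $\Pi$ of the orbit. The genuine gap is in your steps (iv)--(v). You propose to invoke a Berend-type classification of closed invariant sets (``orbit closure is all of the group or lies in finitely many rational affine subtori'') and then exclude the second alternative. No such classification is available in this situation: the action is a product of $k$ independent commuting pairs, hence highly reducible, and closed invariant sets of such product actions can be correlated across the factors in ways that are not classified -- the paper itself emphasises that for $k>1$ the orbit structure on the big group is not well understood. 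You correctly identify this as the obstacle, but the two remedies you suggest (``prove a suitable generalisation'' or ``argue block by block'') are not carried out, and the block-by-block idea does not obviously suffice, since an orbit closure is in general much smaller than the product of its coordinate projections. As it stands, the dynamical heart of the argument is missing.

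The paper replaces the classification step by a weaker structural statement that it can actually prove: every closed $\Sigma$-invariant set contains a torsion point (Proposition \ref{thm:torsion}, proved by induction on $k$ from the one-factor case), and in fact one fixed by $A^s,B^s$ for some $s$ (Lemma \ref{cor:fixed}). Then, working near that torsion point, it uses the irrationality hypothesis to produce orbit points approaching it from outside the non-expanding directions (Lemma \ref{lem:V>1}), uses hypotheses (a), (b) -- and hyperbolicity, which is automatic for integers $p_i,q_i\ge 2$ -- to find an element $D$ of the semigroup whose relevant eigenvalues are all of absolute value $>1$ and pairwise distinct (Lemma \ref{lem:indep}), and deduces that the orbit approximates arbitrarily long line segments in eigendirections (Lemma \ref{lem:longsegments}). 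Finally, a Vandermonde-type nonvanishing argument shows that $\Pi$ maps these long segments onto $\mathbb{R}/\mathbb{Z}$, which gives density; note that irrationality is used to create the segments, not to rule out an invariant subtorus, and condition (b) is used to keep the eigenvalues distinct so the projected segment does not degenerate. To complete your outline you would need to substitute this torsion-point-plus-line-segment mechanism (or an actual classification theorem for these reducible higher-rank actions, which does not exist in the literature) for your step (iv).
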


We prove an analogous results with $p_i$ and $q_i$ being algebraic numbers.
For this we need to introduce the notion of hyperbolicity.
A semigroup $\Sigma$ consisting of algebraic numbers will be called {\it hyperbolic}
provided that for every prime $p$ (including $p=\infty$), if there is a field embedding 
$\theta:\Q(\Sigma)\to \overline{\Q}_p$ such that
$$
\theta(\Sigma)\nsubseteq \{|z|_p\le 1\},
$$
then for all field embeddings $\theta:\Q(\Sigma)\to \overline{\Q}_p$, we have
$$
\theta(\Sigma)\nsubseteq \{|z|_p= 1\}.
$$
For example, if $\alpha>1$ is a real algebraic integer, then the semigroup $\left<\alpha\right>$
is hyperbolic provided that none of the Galois conjugates of $\alpha$ have absolute value one.

Our main result is the following:

\begin{thm} \label{thm:main}
Let $\lambda_i,\mu_i$, $i=1,\ldots,k$, be real algebraic numbers satisfying $|\lambda_i|,|\mu_i|>1$ such that
\begin{enumerate}
\item[(a)] each pair $(\lambda_i,\mu_i)$ is multiplicatively independent,
\item[(b)] for all $i\ne j$, $\theta\in \hbox{\rm Gal}(\bar \Q/\Q)$, and $u\in \N$, 
$(\theta(\lambda_i)^u,\theta(\mu_i)^u)\ne (\lambda^u_j,\mu^u_j)$,
\item[(c)] each semigroup $\left<\lambda_i,\mu_i\right>$ is hyperbolic.
\end{enumerate}
Then for all real numbers $\xi_i$, $i=1,\ldots,k$, 
with at least one of $\xi_i$ satisfying $\xi_i\notin \Q(\lambda_i,\mu_i)$, the set
$$
\left\{\sum_{i=1}^k \lambda_i^n \mu_i^m \xi_i: m,n \in \N\right\}
$$
is dense modulo one.
\end{thm}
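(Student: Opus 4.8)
The plan is to recast the problem dynamically, following the strategy Kra used in the integer case but working over several places at once to accommodate algebraic $\lambda_i,\mu_i$. For each $i$ let $S_i$ be the set of places $v$ of $\Q(\lambda_i,\mu_i)$ (archimedean and non-archimedean) at which $\lambda_i$ or $\mu_i$ fails to be a unit, together with enough extra places so that the $S_i$-integers $\Z[1/\lambda_i,1/\mu_i,\ldots]$ act on a compact abelian group $X_i=\prod_{v\in S_i}K_v/\Gamma_i$ by the commuting pair of surjective endomorphisms $T_i,U_i$ given by multiplication by $\lambda_i,\mu_i$. The point $\xi_i\in\R$ embeds diagonally into $\prod_{v\in S_i}K_v$ and descends to a point $x_i\in X_i$; the hypothesis $\xi_i\notin\Q(\lambda_i,\mu_i)$ guarantees that $x_i$ has infinite orbit closure, in fact (after invoking hyperbolicity) that the orbit closure $\overline{\{T_i^nU_i^m x_i: m,n\in\N\}}$ is all of $X_i$. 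Taking the product $X=\prod_i X_i$ with the diagonal $\Z^2_{\ge 0}$-action and the point $x=(x_1,\ldots,x_k)$, the set in the theorem is (the image under a continuous map to $\R/\Z$ of) the $\N^2$-orbit of $x$, so density modulo one follows once we show this orbit is dense in $X$.

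The first substantive step is to prove that each individual orbit closure is $X_i$. Here hyperbolicity enters: condition (c) ensures that at every place $v\in S_i$ either the pair acts with an expanding direction or the whole semigroup lies in the unit ball, ruling out the ``isometric'' obstruction, and this is exactly what is needed to apply a Berend-type / Rudolph--Johnson-type theorem on $\times\lambda,\times\mu$ actions: a multiplicatively independent commuting pair of toral (here solenoidal/$S$-adic) endomorphisms has no proper infinite closed invariant sets, so every infinite orbit closure is everything. The multiplicative independence in (a) is precisely the hypothesis of that rigidity theorem applied place by place; the only care needed is to check that the $S$-adic solenoid $X_i$ is connected (or to pass to its connected component) so that the classification of invariant sets applies, and to verify that $x_i$ is not contained in a finite-index subgroup or a rational point, which follows from irrationality of $\xi_i$ over $\Q(\lambda_i,\mu_i)$.

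The hard part will be the product: knowing each factor orbit closure is $X_i$ does not immediately give density of the \emph{diagonal} $\N^2$-orbit in $\prod_i X_i$, because the action on each factor is driven by the same $(m,n)$. This is a joinings problem. The plan is to let $Y=\overline{\{(T^nU^m)x: m,n\in\N\}}\subseteq X$, observe $Y$ is a closed invariant subset projecting onto each $X_i$, and show $Y=X$ using the disjointness encoded in condition (b): the pair $(\theta(\lambda_i)^u,\theta(\mu_i)^u)\ne(\lambda_j^u,\mu_j^u)$ for all Galois $\theta$ and all $u$ means that the eigenvalue data of the $\Z^2$-action on $X_i$ and on $X_j$ share no common ``direction'' even after passing to conjugates and finite-index sublattices, so no nontrivial algebraic coupling between $X_i$ and $X_j$ can be invariant. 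Concretely one analyses $Y$ via its stabiliser and via the Fourier/Pontryagin-dual description: a proper invariant $Y$ would force a nontrivial character relation $\prod_i \chi_i\circ(\text{coordinate})$ annihilating $Y$, which being $\Z^2$-invariant would force the corresponding eigencharacters of the several $X_i$ to be matched up by the action, contradicting (b). Assembling this rigidity-plus-disjointness argument — and in particular handling the finitely many non-archimedean places and the fact that the endomorphisms are not automorphisms — is where the real work lies; the reduction to it, and the passage back from $X$ to density in $\R/\Z$, are comparatively formal.
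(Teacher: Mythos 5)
Your reduction to a solenoid/$S$-adic dynamical system and the projection back to $\R/\Z$ matches the paper's setup, but the core of your plan rests on a rigidity statement that is not available -- and is in fact false -- under the hypotheses of the theorem. You assert that conditions (a) and (c) let one invoke a Berend-type theorem to conclude that \emph{every} infinite orbit closure in each factor $X_i$ is all of $X_i$. Berend's characterisation of the ID property for commuting (toral or solenoidal) endomorphisms requires, besides total irreducibility and non-virtual-cyclicity, that \emph{every} common eigendirection (every place and every Galois conjugate) be expanded by some element of the semigroup. The hyperbolicity condition (c) of this paper does not give that: it only forbids conjugate directions on which the whole semigroup has absolute value exactly $1$, and it explicitly allows directions on which every element contracts. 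For example, two multiplicatively independent units $\lambda,\mu>1$ in a real quadratic field satisfy (a) and (c), yet the Galois-conjugate direction is contracted by the entire semigroup, so the action on the corresponding torus has proper infinite closed invariant sets and the ID property fails; nothing forces the orbit closure of the specific point $x_i$ to be $X_i$. (Rudolph--Johnson is a measure-rigidity theorem under positive entropy and does not yield topological density of individual orbits either.) The paper deliberately avoids any such claim -- it notes that the orbit structure in $\Omega$ is not well understood -- and instead proves only three much weaker facts: the orbit closure contains a torsion point (this is where hyperbolicity is used, via the $p$-adic Lemma~\ref{l:div} and Berend's solenoid lemmas); near that torsion point the orbit approximates arbitrarily long ``line segments'' in a single expanding eigendirection (this is where $\xi_i\notin\Q(\lambda_i,\mu_i)$ enters); and the image of those segments under $\Pi$ is dense mod $1$.

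Your second step, the joinings/disjointness argument for the diagonal orbit in the product, is likewise a genuine gap rather than a formality. Closed invariant sets of these actions are not subgroups, so a Pontryagin-dual/character computation cannot classify them, and no topological disjointness theorem of the kind you invoke is known in this generality; proving $Y=X$ would be strictly harder than the theorem itself. In the paper, hypothesis (b) is used for something far more modest: in Lemma~\ref{lem:indep} it guarantees an element $D\in\Sigma$ whose relevant eigenvalues are pairwise distinct, so that a Vandermonde determinant is nonzero and the sum $c(m_0)=\sum_b \omega_b^{m_0}c_b$ does not vanish; this is exactly what makes the projected long segments wind densely around $\R/\Z$ in Section~\ref{sec:proof}. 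So while your general framework (solenoids, expansion, reduction to a dynamical density statement) is in the right spirit, both pillars of your argument -- factor-wise orbit density and product disjointness -- would need to be replaced by the paper's torsion-point-plus-segment mechanism, or by new rigidity results that the stated hypotheses do not support.
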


Previously, D.~Berend \cite{BerDense} have investigated the case $k=1$, and
R.~Urban \cite{UrbInt,UrbAlg1,UrbAlg2} have proved several partial results when $k=2$.

In the next section, we introduce a compact abelian group $\Omega$ 
equipped with an action of a commutative semigroup $\Sigma$ and show 
that the sequence that appears in the main theorem is closely related
to a suitably chosen orbit $\Sigma\omega$ in $\Omega$.
More precisely, this sequence is obtained by applying a projection map $\Pi:\Omega\to \R/\Z$.
This construction is analogous to the one of Berend in \cite{BerDense},
but in the case $k>1$, we have to deal with a larger space $\Omega$ where the 
structure of orbits of $\Sigma$ is not well understood, and this requires
several additional arguments. 
The idea of the proof is to show that
the closure $\overline{\Sigma\omega}$ has an additional structure.
In Section \ref{sec:torsion} we show that $\overline{\Sigma\omega}$
contains a torsion point. 
We note that the hyperbolicity assumption (c) is necessary for existence of a torsion point.
Then using a limiting argument in a neighbourhood
of this torsion point, we demonstrate in Section \ref{sec:curve} that $\Sigma\omega$ approximates
arbitrary long line segments. Finally, we complete the proof in Section \ref{sec:proof}
by showing that the projections under $\Pi$ of such line segments cover $\R/\Z$.
This is where the independence assumption (b) is used.

\subsection*{Acknowledgement}
The first author is support by EPSRC, ERC and RCUK, and the second author is
supported by EPSRC.

\section{Setting}\label{sec:setting}

In this section, we construct a compact abelian group $\Omega$ and 
a commutative semigroup $\Sigma$ of epimorphisms of $\Omega$.
We show that there is a natural projection map $\Pi:\Omega\to \R/\Z$,
and for a suitably chosen $\omega\in \Omega$,
\begin{equation}\label{eq:Pi}
\Pi(\Sigma \omega)= \left\{\sum_{i=1}^k \lambda_i^m \mu_i^n \xi_i: m,n\in\mathbb{N}\right\} \mod 1.
\end{equation}
This reduces the proof of the theorem to analysis of orbit structure of $\Sigma$ in $\Omega$.

Now we explain the details of this construction. Let $K$ be a number field.
We fix a basis $\beta_1,\ldots,\beta_{r}$ of the ring of algebraic integers of $K$.
To every element $\alpha \in K$ we associate a matrix $M(\alpha)=(a_{jl})\in\hbox{Mat}_{r}(\Q)$
determined by
\begin{equation}\label{eq:mult}
\alpha\cdot \beta_j=\sum_{l=1}^{r}a_{jl} \beta_l,\quad 1\le j\le r.
\end{equation}
Suppose that $M(\alpha)\in \hbox{\rm Mat}_{r}(\mathbb{Z}[1/a])$ for some $a\in \mathbb{N}$,
and $a$ is minimal with this property.
We set  
\begin{align*}
\tilde \Omega^r_{a}&:=\mathbb{R}^r\times \prod_{p|a} \mathbb{Q}^r_p,\\
\Omega^r_{a}&=\tilde \Omega^r_{a}/\mathbb{Z}[1/a]^r,
\end{align*}
where $\Z[1/a]^r$ is embedded in $\tilde \Omega^r_{a}$ by the map $z\mapsto (z,-z,\ldots,-z)$.
Then $\Omega^r_{a}$ is a compact abelian group. Every matrix $M\in \hbox{\rm Mat}_{r}(\mathbb{Z}[1/a])$
naturally acts on $\tilde \Omega^r_{a}$ diagonally and defines a map
$$
M:\Omega^r_{a}\to \Omega^r_{a}.
$$
The distribution of orbits of such maps will play a crucial role in this paper.

The following lemma will be useful:

\begin{lem}\label{l:div}
If a prime $p$ divides $a$, then there is an embedding $\theta:\mathbb{Q}(\alpha)\to \overline{\Q}_p$
such that $|\theta(\alpha)|_p> 1$.
\end{lem}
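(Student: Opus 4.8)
The plan is to prove the contrapositive: if $|\theta(\alpha)|_p\le 1$ for \emph{every} field embedding $\theta\colon\Q(\alpha)\to\overline{\Q}_p$, then $p\nmid a$. The mechanism is to show that this hypothesis forces every entry of $M(\alpha)$ into the localization $\Z_{(p)}$; together with $M(\alpha)\in\Mat_r(\Z[1/a])$ this yields $M(\alpha)\in\Mat_r\big(\Z[1/a]\cap\Z_{(p)}\big)=\Mat_r(\Z[1/a_0])$, where $a_0$ is the prime-to-$p$ part of $a$. Since $a_0<a$ whenever $p\mid a$, this contradicts the minimality of $a$, forcing $|\theta(\alpha)|_p>1$ for some $\theta$.

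\textbf{Step 1: characteristic polynomial.} Because $\beta_1,\dots,\beta_r$ is a $\Z$-basis of the ring of algebraic integers $\mathcal O_K$, the matrix $M(\alpha)$ defined by \eqref{eq:mult} represents multiplication by $\alpha$ on $K$ in that basis; hence its characteristic polynomial is the field polynomial $\chi(X)=\prod_{\sigma}(X-\sigma(\alpha))$, the product over the $r$ embeddings $\sigma\colon K\to\overline{\Q}$, equivalently $\chi=m_\alpha^{[K:\Q(\alpha)]}$ with $m_\alpha\in\Q[X]$ the minimal polynomial of $\alpha$. Fixing once and for all an embedding $\overline{\Q}\hookrightarrow\overline{\Q}_p$, every root of $\chi$ in $\overline{\Q}_p$ is of the form $\sigma(\alpha)$; restricting $\sigma$ to $\Q(\alpha)$ gives an embedding $\theta$ with $\theta(\alpha)=\sigma(\alpha)$, so by hypothesis every such root has $p$-adic absolute value $\le 1$. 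The coefficients of $\chi$, being elementary symmetric functions of these roots, therefore lie in the valuation ring of $\overline{\Q}_p$, and being rational they lie in $\Z_{(p)}$. Thus $\chi\in\Z_{(p)}[X]$ is monic.

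\textbf{Step 2: integrality and conclusion.} By Cayley--Hamilton $\chi(M(\alpha))=0$. Since $\alpha\mapsto M(\alpha)$ is an injective $\Q$-algebra homomorphism $K\to\Mat_r(\Q)$, we get $M(\chi(\alpha))=\chi(M(\alpha))=0$, hence $\chi(\alpha)=0$, so $\alpha$ is integral over $\Z_{(p)}$. As localization commutes with integral closure, the integral closure of $\Z_{(p)}$ in $K$ is $\mathcal O_K\otimes_{\Z}\Z_{(p)}$, a subring of $K$ having $\beta_1,\dots,\beta_r$ as a $\Z_{(p)}$-basis; hence $\alpha\in\mathcal O_K\otimes\Z_{(p)}$, and then $\alpha\beta_j=\sum_l a_{jl}\beta_l\in\mathcal O_K\otimes\Z_{(p)}$ forces every $a_{jl}\in\Z_{(p)}$, i.e.\ $M(\alpha)\in\Mat_r(\Z_{(p)})$. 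Combined with $M(\alpha)\in\Mat_r(\Z[1/a])$ this contradicts the minimality of $a$ as explained above.

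The only point I would check with care is Step~1: matching the roots of $\chi$ over $\overline{\Q}_p$ with the values $\theta(\alpha)$ over all embeddings $\theta\colon\Q(\alpha)\to\overline{\Q}_p$, and the elementary fact that a monic rational polynomial all of whose roots are $p$-adically integral already has coefficients in $\Z_{(p)}$; the rest is routine localization bookkeeping, so I do not anticipate a real obstacle. If one prefers to avoid characteristic polynomials, there is an equivalent route: $M(\alpha)\notin\Mat_r(\Z_{(p)})$ is the same as $\alpha\notin\mathcal O_K\otimes\Z_p=\prod_{\mathfrak p\mid p}\mathcal O_{K_{\mathfrak p}}$, so $v_{\mathfrak p}(\alpha)<0$ for some prime $\mathfrak p$ of $K$ above $p$, and any embedding $K_{\mathfrak p}\hookrightarrow\overline{\Q}_p$ restricts to a $\theta$ with $|\theta(\alpha)|_p=|\alpha|_{\mathfrak p}>1$.
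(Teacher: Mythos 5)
Your proof is correct and follows essentially the same strategy as the paper's: assume every embedding $\theta:\Q(\alpha)\to\overline{\Q}_p$ satisfies $|\theta(\alpha)|_p\le 1$, deduce that the prime $p$ cannot occur in the denominators of $M(\alpha)$, and contradict the minimality of $a$. The only cosmetic difference is that you work locally at $p$ (obtaining $M(\alpha)\in\Mat_r(\Z_{(p)})$ via the characteristic polynomial and the integral closure of $\Z_{(p)}$ in $K$), whereas the paper first passes to $\beta=b\alpha$ with $b$ the prime-to-$p$ part of $a$ and argues with global algebraic integers.
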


\begin{proof}
We write $a=p^nb$ with $\gcd(p,b)=1$ and set $\beta=b\alpha$.
It follows from (\ref{eq:mult}) that
for every Galois conjugate $\theta(\beta)$, the multiplication by $p^n\theta(\beta)$
preserves the integral module $\Z\theta(\beta_1)+\cdots +\Z\theta(\beta_{r})$. Therefore,
$p^n\theta(\beta)$ is an algebraic integer, and $|\theta(\beta)|_q\le 1$ for all 
Galois conjugates of $\beta$ and all primes $q\ne p$.
Suppose that also $|\theta(\beta)|_p\le 1$ for all Galois conjugates of $\beta$.
Then $\beta$ is an algebraic integer and, in particular,
$$
\beta \cdot \beta_j\in \Z\beta_1+\cdots +\Z\beta_{r}
$$
for all $j$. On the other hand, since $a$ is minimal
with the property $M(\alpha)\in \hbox{Mat}_r(\Z[1/a])$, it follows that 
$$
\beta \cdot \beta_j\notin \Z\beta_1+\cdots +\Z\beta_{r}
$$
for some $j$. This contradiction shows that
$|\theta(\alpha)|_p=|\theta(\beta)|_p> 1$ for some $\theta$, as required.
\end{proof}

Now we adopt this construction to our setting.
Let $K_i$ be a number field of degree $r_i$ that contains $\lambda_i$ and $\mu_i$,
and let $A_i=M(\lambda_i)$ and $B_i=M(\mu_i)$ be the matrices in $\hbox{Mat}_{r_i}(\Z[1/a_i])$
defined as above, where $a_i\in \N$ is minimal with this property.
We denote by $\Sigma_i$ the commutative semigroup generated by $A_i$ and $B_i$.
This semigroup acts on $\tilde \Omega_{a_i}^{r_i}$ and $\Omega_{a_i}^{r_i}$. We also consider the semigroup
$$
\Sigma:=\{(A_1^nB_1^m,\ldots,A_k^nB_k^m):m,n\in\mathbb{N}\}
$$
generated by $A:=(A_1,\ldots,A_k)$ and $B:=(B_1,\ldots,B_k)$
that naturally acts on
$$
\Omega:=\prod_{i=1}^k \Omega_{a_i}^{r_i}.
$$
We denote by $\pi:\tilde\Omega:=\prod_{i=1}^k \tilde\Omega^{r_i}_{a_i}\to \Omega$ the corresponding projection
map. We write
$$
\tilde \Omega=\prod_{i=1}^k\prod_{j=1}^{h_i} \Q_{p_{ij}}^{r_i}
$$
where $p_{i1}=\infty,\ldots, p_{ih_i}$ are the primes dividing $a_i$  (here we write $\Q_\infty=\mathbb{R}$).
We denote by $\{e_{ijl}\}$ the standard basis of $\tilde \Omega$, and introduce a projection map
\begin{equation}\label{eq:Pi2}
\Pi: \tilde \Omega\to \R/\Z: \sum_{i,j,l} s_{ijl}e_{ijl}\mapsto
\sum_{i,j} \{s_{ij1}\}_{p_{ij}}\mod 1,
\end{equation}
where $\{x\}_\infty$ denotes the usual fractional part, and $\{x\}_p$
denotes the $p$-adic fractional part. Namely, for
$x=\sum_{u=-N}^\infty x_u p^u\in \mathbb{Q}_p$, we set $\{x\}_p=\sum_{u=-N}^{-1} x_u p^u$.
It is easy to check $\Pi$ is continuous, and 
$$
\Pi\left(\prod_{i=1}^k \Z[1/a_i]^{r_i}\right)=0\mod 1.
$$
Hence, $\Pi$ also defines a map $\Omega \to \R/\Z$.

It follows from the definition of $A_i=M(\lambda_i)$ and $B_i=M(\mu_i)$ that
they have a joint eigenvector $v_i\in\R^{r_i}$ with eigenvalues $\lambda_i$ and $\mu_i$
respectively. Let us assume for now that the first coordinate of $v_i$ is nonzero.
Then we normalise $v_i$ so that this coordinate is one. We set
$$
v=\prod_{i=1}^k (\xi_i v_i,0,\ldots,0)\in \tilde\Omega\quad\hbox{and}\quad
\omega=\pi\left(v\right)\in \Omega.
$$
Then it follows from the definition of $\Pi$ that (\ref{eq:Pi}) holds.

Although this construction may be applied to any choices of the number fields $K_i$,
it is most convenient to choose these fields to be of the smallest size, and we
adopt an idea from \cite{BerMin}.
For every $i=1,\ldots,k$, we pick $l_i \in \N$ so that $\Q(\lambda_i^{l_i},\mu_{i}^{l_i})=\bigcap_{l=1}^\infty
\Q(\lambda_i^l,\mu_i^l)$, and we set  $l_0=\prod_{i=1}^k l_i.$ Then 
$\Q(\lambda_i^{l_0},\mu_{i}^{l_0})=\bigcap_{l=1}^\infty \Q(\lambda_i^l,\mu_i^l). $ 
We observe that the numbers $\lambda_i^{l_0}$ and $\mu_{i}^{l_0}$ are satisfying the assumptions
Theorem \ref{thm:main}, and if we prove the claim of the theorem for these numbers,
then the theorem would follow for $\lambda_i$'s and $\mu_{i}$'s as well.
Hence, from now on we assume that $l_0=1$ and take $K_i=\Q(\lambda_i,\mu_{i})$.

The main advantage of this construction is the following lemma:

\begin{lem}\label{lem:irreducible}
There exists $C_i \in \Sigma_i$ such that the characteristic polynomial of $C_i^u$
is irreducible over $\Q$ for every $u \in \N$.
\end{lem}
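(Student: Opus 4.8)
The plan is to recast the statement in purely field‑theoretic terms and then to find a suitable exponent pair by a rank count over the (finitely many) proper subfields of $K_i$. First I would use the standard fact that for $\beta\in K_i$ the matrix $M(\beta)$ is the matrix, in the basis $\beta_1,\dots,\beta_{r_i}$, of multiplication by $\beta$ on $K_i$, so its characteristic polynomial equals $f_\beta^{\,r_i/[\Q(\beta):\Q]}$, where $f_\beta$ is the minimal polynomial of $\beta$ over $\Q$; since $K_i/\Q$ is separable the exponent is an honest integer, and therefore the characteristic polynomial of $M(\beta)$ is irreducible over $\Q$ if and only if $\Q(\beta)=K_i$. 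Because $\beta\mapsto M(\beta)$ is a ring homomorphism, every $C_i\in\Sigma_i$ has the form $C_i=A_i^nB_i^m=M(\lambda_i^n\mu_i^m)$, so, setting $\alpha=\lambda_i^n\mu_i^m$, we get $C_i^u=M(\alpha^u)$. Hence it is enough to produce integers $n,m\ge 1$ such that $\Q(\alpha^u)=K_i$ for every $u\ge 1$.

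Next I would exploit the normalisation $l_0=1$ already arranged above, which by construction of $l_0$ means precisely that $\Q(\lambda_i^l,\mu_i^l)=K_i$ for every $l\ge 1$. Let $\Gamma=\langle\lambda_i,\mu_i\rangle\le K_i^\times$; multiplicative independence of $(\lambda_i,\mu_i)$ makes $(n,m)\mapsto\lambda_i^n\mu_i^m$ an isomorphism from $\Z^2$ onto $\Gamma$. As $K_i/\Q$ is finite and separable it has only finitely many intermediate fields. For each subfield $\Q\subseteq L\subsetneq K_i$ put $\Gamma_L=\Gamma\cap L^\times$. If $\Gamma_L$ had rank $2$ it would have finite index $N$ in $\Gamma$, so $\lambda_i^N,\mu_i^N\in L$ and therefore $K_i=\Q(\lambda_i^N,\mu_i^N)\subseteq L$, a contradiction; hence each $\Gamma_L$ has rank $\le 1$, and under the isomorphism above it is contained in a single line $\ell_L\subseteq\Q^2$ through the origin (the degenerate line $\{0\}$ when the rank is $0$).

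Finally I would pick $(n,m)$ with $n,m\ge 1$ lying on none of the finitely many lines $\ell_L$ — for instance each $\ell_L$ contains at most one point with first coordinate $1$, so some $(1,k)$, $k\ge 1$, works — and set $C_i=A_i^nB_i^m\in\Sigma_i$. Then for any proper subfield $L$ and any $u\ge 1$: if $\alpha^u\in L$ then $\alpha^u\in\Gamma\cap L^\times=\Gamma_L$, so the image $u\cdot(n,m)$ of $\alpha^u$ in $\Z^2$ lies on $\ell_L$, forcing $(n,m)=u^{-1}\cdot u(n,m)\in\ell_L$, which contradicts the choice of $(n,m)$. Thus $\alpha^u$ lies in no proper subfield, i.e.\ $\Q(\alpha^u)=K_i$ for all $u\ge 1$, and by the first paragraph the characteristic polynomial of $C_i^u$ is irreducible over $\Q$ for all $u$.

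The only step carrying real content, and the one I expect to be the crux, is the rank bound $\rank\Gamma_L\le 1$: this is exactly where the reduction to $l_0=1$ is needed, and it is what turns the problem into avoiding finitely many lines in $\Z^2$. Everything else is bookkeeping with characteristic versus minimal polynomials and an elementary counting argument.
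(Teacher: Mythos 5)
Your proof is correct, but it takes a genuinely different route from the paper. The paper disposes of the lemma by citation: since the reduction in Section \ref{sec:setting} ensures $\Q(\lambda_i,\mu_i)=\bigcap_{l\ge 1}\Q(\lambda_i^l,\mu_i^l)$, Lemma~4.2 of \cite{BerMin} supplies an element $\sigma_i$ of the multiplicative semigroup generated by $\lambda_i,\mu_i$ with $\Q(\sigma_i^u)=\Q(\lambda_i,\mu_i)$ for all $u$, and then $C_i=M(\sigma_i)$ works because $C_i^u$ has the eigenvalue $\sigma_i^u$ of degree $r_i$ over $\Q$. You instead prove the needed field-theoretic fact from scratch: you identify irreducibility of the characteristic polynomial of $M(\beta)$ with $\Q(\beta)=K_i$ via the formula $f_\beta^{\,r_i/[\Q(\beta):\Q]}$, note that the $l_0=1$ normalisation forces $\Q(\lambda_i^l,\mu_i^l)=K_i$ for every $l\ge 1$, and then run a rank count: for each of the finitely many proper intermediate fields $L$, the group $\Gamma\cap L^\times$ cannot have rank $2$ (else $\lambda_i^N,\mu_i^N\in L$ for some $N$, contradicting $\Q(\lambda_i^N,\mu_i^N)=K_i$), so it lies on a line in $\Q^2$, and an exponent pair $(n,m)$ avoiding these finitely many lines gives $\Q\bigl((\lambda_i^n\mu_i^m)^u\bigr)=K_i$ for all $u\ge 1$; multiplicative independence is what makes the exponent map $\Z^2\to\Gamma$ injective so that the line-avoidance argument applies, and homogeneity of the lines handles all powers $u$ at once. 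In effect you have reproved the relevant special case of Berend's Lemma~4.2. What your version buys is self-containedness and an explicit choice (one can take $C_i=A_iB_i^{\,k}$ for all but finitely many $k$); what the paper's version buys is brevity and the greater generality of Berend's lemma. All the supporting steps you use (that $\beta\mapsto M(\beta)$ is an algebra homomorphism, that a rank-$2$ subgroup of $\Z^2$ has finite index, that a separable extension has finitely many intermediate fields) are standard and correctly applied, so I see no gap.
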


\begin{proof}
This follows from \cite[Lemma~4.2]{BerMin}.
Indeed, since $\Q(\lambda_i,\mu_{i})=\bigcap_{l=1}^\infty \Q(\lambda_i^l,\mu_i^l)$,
by this lemma there exists $\sigma_i$ in the semigroup generated by $\lambda_i$ and
$\mu_i$ such that $\Q(\sigma_i^n)=\Q(\lambda_i,\mu_{i})$ for all $n \in \N$.
Since the matrix $C^n_i=M(\sigma^n_i)\in\hbox{Mat}_{r_i}(\Z[1/a_i])$ has an eigenvalue $\sigma_i^n$
of degree $r_i$ over $\Q$, the claim follows.
\end{proof}

We denote by $v_{il}$, $1\le l\le r_i$, the eigenvectors of the matrix $C_i$. 
Since all the eigenvalues of $C_i$ are distinct, it follows that $v_{il}$'s
are also eigenvectors of the whole semigroup $\Sigma_i$. 
For $D\in \Sigma_i$, we denote by $\lambda_{il}(D)$ the corresponding eigenvalue.
In particular, we set $\lambda_{il}=\lambda_{il}(A_i)$ and $\mu_{il}=\lambda_{il}(B_i)$.
We choose the indices, so that 
$\lambda_{i1}=\lambda_i$ and $\mu_{i1}=\mu_i$.
Since the characteristic polynomial of $C_i$ is irreducible, all the eigenvectors of
$v_{il}$, $1\le l\le r_i$, are conjugate under the Galois action, and it 
follows that their coordinates with respect to the standard basis are nonzero.

It follows from Lemma \ref{lem:irreducible} that $\lambda_{il_1}(C_i)^u\ne \lambda_{il_2}(C_i)^u$
for all $l_1\ne l_2$ and $u\in \N$. Hence, in particular,
\begin{equation}\label{eq:neq}
(\lambda_{il_1}^u,\mu_{il_1}^u)\ne (\lambda_{il_2}^u,\mu_{il_2}^u)
\quad \hbox{for all $l_1\ne l_2$ and $u\in \N$.}
\end{equation}

We also introduce an eigenbasis for the space $\tilde \Omega$.
Let $L_{ij}$ be the splitting field of the matrix $C_i$ over $\Q_{p_{ij}}$.
We set
\begin{align*}
V=\prod_{i=1}^r \prod_{j=1}^{h_i} V_{ij}\quad\hbox{where $V_{ij} =L^{r_i}_{ij}$}.
\end{align*}
We denote by $v_{ijl}$, $l=1,\ldots, r_i$, the basis of the factor $V_{ij}$ consisting of eigenvectors of
$C_i$ chosen as above.
Then $v_{ijl}$ with $i=1,\ldots,k$, $j=1,\ldots,h_i$, $l=1,\ldots,h_i$ forms a basis of $V$
consisting of eigenvectors of $\Sigma$. In these notation,
$$
v=\sum_{i=1}^k \xi_i v_{i11}\quad\hbox{and}\quad \omega=\pi\left(v\right).
$$
We normalise the eigenvectors $v_{ijl}$ so that their first coordinates with respect 
the standard bases of $L_{ij}^{r_i}$ are equal to one. Then the projection map $\Pi$ is given by
\begin{equation}\label{eq:projj}
\Pi \left(\sum_{i,j,l} c_{ijl}v_{ijl}\right)= \sum_{i,j,l} \{c_{ijl}\}_{p_{ij}} \mod \Z. 
\end{equation}

\section{Existence of torsion elements }
\label{sec:torsion}

In this section we investigate existence of torsion elements in closed
$\Sigma$-invariant subsets of $\Omega$ and prove

\begin{prop}\label{thm:torsion}
Every closed $\Sigma$-invariant subset of $\Omega$ contains a torsion element.
\end{prop}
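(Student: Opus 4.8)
The plan is to pass, via Zorn's lemma, to a minimal closed $\Sigma$-invariant set, to exhibit a hyperbolic element of $\Sigma$, and to finish by a Berend-type rigidity argument; only assumptions~(a) and~(c) will be used here, with~(b) entering only in the later sections.

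First, it suffices to treat \emph{minimal} closed $\Sigma$-invariant subsets $Y\subseteq\Omega$, since by compactness every nonempty closed $\Sigma$-invariant set contains one. If $t\in\Omega$ is a torsion point, say of order $n$, then $\sigma t$ lies in the finite group of $n$-torsion elements for every $\sigma\in\Sigma$, so $\overline{\Sigma t}$ is finite; thus a minimal $Y$ contains a torsion point precisely when $Y$ is finite, and it is this finiteness that must be established. Finally, since $\Sigma$ acts on $\Omega=\prod_{i=1}^k\Omega^{r_i}_{a_i}$ diagonally (the same exponent pair $(m,n)$ in every coordinate), each coordinate projection $\pr_i(Y)\subseteq\Omega^{r_i}_{a_i}$ is an equivariant factor of $(Y,\Sigma)$ and hence is again a minimal closed $\Sigma_i$-invariant set, with $Y\subseteq\prod_i\pr_i(Y)$; so it is enough to prove that every minimal closed $\Sigma_i$-invariant subset of the solenoid $\Omega^{r_i}_{a_i}$ is finite.

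The key construction is that of a \emph{hyperbolic} element $S\in\Sigma$, meaning that all eigenvalues of $S$ acting on $\tilde\Omega$ have $p_{ij}$-absolute value $\ne 1$ on each factor $\Q^{r_i}_{p_{ij}}$. For a fixed eigendirection $v_{ijl}$, the set of $(m,n)\in\R^2$ for which the corresponding eigenvalue of $A^m_iB^n_i$ has $p_{ij}$-absolute value $1$ is a linear subspace $H_{ijl}\subseteq\R^2$, and $H_{ijl}\ne\R^2$: for $p_{ij}=\infty$, $l=1$ the eigenvalue is $\lambda^m_i\mu^n_i$ with $|\lambda_i|>1$, while in all other cases Lemma~\ref{l:div} (applicable because $a_i$ is minimal for $A_i,B_i\in\Mat_{r_i}(\Z[1/a_i])$) provides an embedding under which $\langle\lambda_i,\mu_i\rangle$ escapes $\{|z|_{p_{ij}}\le1\}$, so that by the hyperbolicity assumption~(c) no embedding carries $\langle\lambda_i,\mu_i\rangle$ into $\{|z|_{p_{ij}}=1\}$. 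As there are finitely many triples $(i,j,l)$, we may pick $(m_0,n_0)\in\N^2$ outside $\bigcup_{i,j,l}H_{ijl}$ and set $S:=A^{m_0}B^{n_0}$. A hyperbolic $S$ has no root-of-unity eigenvalue, hence acts ergodically, and $S^N-\id$ is invertible on $\tilde\Omega$ for every $N$, so $\ker(S^N-\id)\subseteq\Omega$ is finite; in particular every $S$-periodic point of $\Omega$ is torsion, and a nonempty finite $\Sigma$-invariant set $Y'$ always contains one, since $\bigcap_{N\ge 0}S^N Y'$ is a nonempty finite set on which $S$ acts bijectively. This reduces the proposition to showing that no minimal closed $\Sigma_i$-invariant subset of $\Omega^{r_i}_{a_i}$ is infinite.

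Proving this last statement is the main obstacle. Suppose $Y\subseteq\Omega^{r_i}_{a_i}$ were minimal, closed, $\Sigma_i$-invariant and infinite; then $Y$ is perfect, and using a hyperbolic element of $\Sigma_i$ (for instance the $i$-th component of $S$) one splits $\tilde\Omega^{r_i}_{a_i}$ into expanding and contracting subspaces. Applying high powers of expanding elements of $\Sigma_i$ to pairs of nearby points of $Y$ and passing to limits yields, inside $Y$, plaques of unstable leaves of macroscopic size; propagating these along the expanding directions — using the non-lacunarity of the scaling semigroup there, which the multiplicative independence~(a) and hyperbolicity~(c) assumptions guarantee, together with the irreducibility of all powers of the characteristic polynomial of $C_i$ from Lemma~\ref{lem:irreducible}, which forbids proper infinite closed connected $\Sigma_i$-invariant subgroups — forces $Y=\Omega^{r_i}_{a_i}$. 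This contradicts minimality, because $\{0\}$ is a strictly smaller closed $\Sigma_i$-invariant set. Carrying out this plaque-propagation simultaneously at the archimedean place and at all of the non-archimedean places $p_{ij}$ is exactly where hyperbolicity~(c) is indispensable: without it, $\Sigma_i$ could act by rotations on some factor, the splitting into expanding and contracting subspaces would fail to exhaust $\tilde\Omega^{r_i}_{a_i}$, and proper infinite minimal sets with no torsion points could occur. This is the point at which we rely on, and adapt to the solenoidal setting, the analysis of Berend in \cite{BerMin,BerDense}.
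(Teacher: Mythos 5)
Your overall architecture is sound, and in two places it genuinely deviates from the paper in a reasonable way: you replace the paper's induction on $k$ by the observation that each $\pr_i(Y)$ is a minimal $\Sigma_i$-factor and $Y\subseteq\prod_i\pr_i(Y)$, which is a clean alternative; and your explicit construction of a hyperbolic $S=A^{m_0}B^{n_0}$ by avoiding the finitely many proper subspaces $H_{ijl}\subseteq\R^2$ is correct (the paper only ever needs an ergodic element plus the directionwise statement \eqref{eq:hyperbolic}). The step ``finite invariant set $\Rightarrow$ contains a torsion point'' via periodic points of $S$ is also fine.

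The gap is in your final paragraph, which is where all the real work sits. Your plaque-propagation argument presumes that pairs of nearby points of an infinite minimal $Y\subseteq\Omega_{a_i}^{r_i}$ can be separated by applying suitable expanding elements of $\Sigma_i$. But hyperbolicity only guarantees that each eigendirection $v_{ijl}$ satisfies $|\lambda_{il}(D)|_{p_{ij}}\ne 1$ for \emph{some} $D$; it does not rule out the subspace $V_i^{\le 1}$ of directions on which \emph{every} element of $\Sigma_i$ is non-expanding (e.g.\ the Galois-conjugate directions at the real place where $|\lambda_{il}|,|\mu_{il}|<1$). If the differences $y^{(n)}$ with $\pi(y^{(n)})\in Y-Y$, $y^{(n)}\to 0$, eventually all lie in $V_i^{\le 1}$, no element of the semigroup blows them up, your unstable plaques never acquire macroscopic size, and the conclusion $Y=\Omega_{a_i}^{r_i}$ does not follow. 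The paper's proof of Lemma~\ref{lem:torsioni} splits exactly along this dichotomy: when some $y^{(n)}\notin V_i^{\le 1}$ infinitely often it runs the expansion argument (Case~I of \cite{BerDense}); in the complementary case it abandons expansion entirely and instead uses recurrence of an ergodic $D\in\Sigma_i$ to write $D^n(x)-x=\pi(y)$ with $y\in V_i^{\le 1}$, contracts $y$ to $0$ by a suitable $E\in\Sigma_i$, and extracts a $D^n$-fixed (hence torsion) point $z\in M$ in the limit. You need an argument of this second kind — or some other way to handle accumulation purely along $V_i^{\le 1}$ — before your reduction chain closes.
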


We start the proof with a lemma that generalises \cite[Proposition~4.1]{BerMin},
which dealt with toral automorphisms.

\begin{lem}
\label{lem:torsioni}
Every $\Sigma_i$-minimal subset of $\Omega_{a_i}^{r_i}$ consists of torsion elements. 
\end{lem}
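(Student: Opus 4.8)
The plan is to analyze a $\Sigma_i$-minimal set $X \subseteq \Omega_{a_i}^{r_i}$ using the eigenbasis $v_{i1},\ldots,v_{ir_i}$ of $C_i$, exploiting the hyperbolicity assumption (c) to rule out ``neutral'' directions. First I would lift $X$ to $\tilde\Omega_{a_i}^{r_i} = \R^{r_i}\times\prod_{p\mid a_i}\Q_p^{r_i}$ and write a point as $\sum_l c_{il} v_{il}$ (with $c_{il}$ in the relevant splitting field $L_{ij}$ at each place). For a fixed coordinate $l$, the semigroup $\Sigma_i$ acts on that coordinate by multiplication by the eigenvalue $\lambda_{il}(D)$, $D\in\Sigma_i$; applying Lemma~\ref{l:div} together with hyperbolicity of $\langle\lambda_i,\mu_i\rangle$, one sees that for each place $p_{ij}$ dividing $a_i$ there is some $D\in\Sigma_i$ with $|\lambda_{il}(D)|_{p_{ij}}<1$ for \emph{every} $l$ — i.e.\ no eigenvalue lies on the unit circle at that place — and at the archimedean place $|\lambda_{il}(D)|_\infty$ can likewise be driven away from $1$. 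The point is that hyperbolicity forbids the mixed situation where some Galois conjugate has absolute value $>1$ at $p$ while another has absolute value exactly $1$: this is exactly the hypothesis that makes every nonarchimedean place ``expanding'' on a cofinal sub-semigroup.

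Next I would use minimality to locate a torsion point. The strategy is the standard recurrence-plus-contraction argument: take a minimal set $X$, pick $x\in X$, and by minimality find a sequence $D_n\in\Sigma_i$ with $D_n x \to x$. Passing to the lift and to a subsequence, arrange that on each nonarchimedean factor $\Q_{p_{ij}}^{r_i}$ the operator $D_n$ contracts (the $p$-adic valuations of all eigenvalues $\lambda_{il}(D_n)$ tend to $+\infty$ by the previous paragraph), and simultaneously that the archimedean part $D_n$ restricted to some subset of coordinates also contracts while the complementary coordinates can be controlled. The contraction on $p$-adic factors forces, in the limit, that the $p$-adic coordinates of the recurrent point become integral, i.e.\ lie in $\Z_{p_{ij}}$; combined with the lattice $\Z[1/a_i]^{r_i}$ by which we quotiented, this pins the point down to lie in the image of $\R^{r_i}$ alone, after which the archimedean contraction together with compactness of $X$ and discreteness of $\Z[1/a_i]^{r_i}$ inside $\R^{r_i}\times\prod\Q_p^{r_i}$ forces the coordinates into $\Q$. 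A point of $\Omega_{a_i}^{r_i}$ all of whose coordinates are rational is torsion, and by minimality $X$ equals the orbit closure of this torsion point, which is then a finite set of torsion elements.

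The main obstacle I anticipate is the archimedean place. Unlike the $p$-adic places, where a single element of $\Sigma_i$ with all eigenvalues of positive valuation genuinely contracts the whole factor, at $\infty$ the real eigenvalues $\lambda_{il}$ have $|\lambda_{il}|>1$ (or, for complex-embedded conjugates, could a priori be anything), so there is no single contracting element; one only has hyperbolicity telling us no eigenvalue is \emph{exactly} $1$ in modulus at a place where expansion occurs, but expansion is the generic behavior at $\infty$. The resolution is to run the recurrence argument not in $\tilde\Omega_{a_i}^{r_i}$ directly but after the $p$-adic coordinates have already been forced to be integral: once we are reduced to the torus $\R^{r_i}/\Z^{r_i}$-type quotient (more precisely the image of $\R^{r_i}$ in $\Omega_{a_i}^{r_i}$) with a $\Sigma_i$-action by matrices having no eigenvalue on the unit circle — a hyperbolic toral situation — the torsion-point statement is precisely \cite[Proposition~4.1]{BerMin}, which the lemma announces it generalizes. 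So the real work is the reduction from $\Omega_{a_i}^{r_i}$ to that toral quotient, carried out via the $p$-adic contraction, and then citing Berend's toral result; I would present the $p$-adic reduction carefully and invoke \cite[Proposition~4.1]{BerMin} for the final step.
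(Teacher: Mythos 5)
Your plan hinges on the claim that, at each finite place $p_{ij}\mid a_i$, hyperbolicity produces a single $D\in\Sigma_i$ with $|\lambda_{il}(D)|_{p_{ij}}<1$ for \emph{every} $l$, so that the whole $p$-adic factor can be contracted and the minimal set pushed into $\prod_p\Z_p^{r_i}$, reducing to a toral situation where \cite[Proposition~4.1]{BerMin} applies. That claim is false. Hyperbolicity (together with Lemma~\ref{l:div}) only gives, for each individual direction $(i,j,l)$, \emph{some} $D\in\Sigma_i$ with $|\lambda_{il}(D)|_{p_{ij}}\neq 1$ --- and this absolute value may well be $>1$. Indeed, for $\lambda_i=3/2$, $\mu_i=5/2$ one has $|\lambda_i|_2=|\mu_i|_2=2>1$, so every element of the semigroup \emph{expands} at $p=2$ and no contracting element exists; the same can happen for other Galois embeddings. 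Consequently the entire reduction (force $p$-adic integrality, pass to a hyperbolic toral quotient, cite Berend) collapses: directions that some element expands simply cannot be contracted away, at the archimedean place or at the finite ones. The paper deals with exactly this issue by splitting each $V_{ij}$ into $V^{\le 1}_{ij}$ (directions never expanded by any element of $\Sigma_i$, hence strictly contracted by some element thanks to hyperbolicity) and $V^{>1}_{ij}$ (directions expanded by some element), and treats the expanded directions by a completely different mechanism: if the differences $y_n\in M-M$ accumulating at $0$ have components outside $V^{\le 1}_i$ infinitely often, Berend's Case~I argument from \cite[p.~252]{BerDense} shows the minimal set would have to be all of $\Omega^{r_i}_{a_i}$, a contradiction; only after this is one reduced to differences lying in $V^{\le 1}_i$, where a recurrence-plus-contraction argument ($D^n(x)-x=\pi(y)$, $E^{m_s}(y)\to 0$, $E^{m_s}(x)\to z$, hence $D^n(z)=z$ and $z$ is torsion by \cite[Lemma~II.15]{BerSolenoid}) finishes the proof. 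Your proposal has no substitute for the Case~I step, which is where the expanding directions are actually disposed of.

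Two smaller problems: a point of $\Omega^{r_i}_{a_i}$ whose lift has all coordinates rational is \emph{not} automatically torsion --- torsion requires the real and $p$-adic coordinates to arise from one and the same element of $\Q$ via the twisted diagonal embedding of $\Z[1/a_i]^{r_i}$ (e.g.\ the class of $(1/3,0)$ in $(\R\times\Q_2)/\Z[1/2]$ is not torsion) --- and your sketch omits the case of a finite minimal set, which the paper handles separately using ergodicity of $C_i$ and \cite[Lemma~II.15]{BerSolenoid}.
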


\begin{proof}
We consider the decomposition
$$
V_{ij}=V^{\le 1}_{ij}\oplus V^{>1}_{ij}
$$
where
\begin{align*}
V^{\le 1}_{ij}&:=\left<v_{ijl}:  |\lambda_{il}(D)|_{p_{ij}} \le 1\,\hbox{for all $D \in \Sigma_i$} \right>,\\
V^{> 1}_{ij}&:=\left<v_{ijl}:  |\lambda_{il}(D)|_{p_{ij}} > 1\,\hbox{for some $D \in \Sigma_i$} \right>.
\end{align*}
In view of Lemma \ref{l:div}, the assumption that 
the semigroup $\Sigma_i$ is hyperbolic implies that
for every $i,j,l$ there exists $D\in \Sigma_i$ such that 
\begin{equation}\label{eq:hyperbolic}
|\lambda_{il}(D)|_{p_{ij}}\ne  1.
\end{equation}

Let $M$ be a $\Sigma_i$-minimal subset of $\Omega_{a_i}^{r_i}$.
Suppose, first, that $M$ is finite.
We recall that the action of an element $D\in \Sigma_i$ on $\Omega_{a_i}^{r_i}$ is ergodic
provided that it has no roots of unity as eigenvalues.
In particular, it follows that $C_i\in \Sigma_i$ is ergodic.
Now it follows from \cite[Lemma~II.15]{BerSolenoid} that $M$ consists of torsion elements.

Suppose that $M$ is infinite. Then $M-M$ contains 0 as an accumulation point. 
Let $y_n\in \tilde\Omega^{r_i}_{a_i}$ be a sequence such that $y_n\to 0$ and $\pi(y_n)\in M-M$.
If 
$$
y_n\notin V_i^{\le 1}:= \bigoplus_{j=1}^{h_i} V_{ij}^{\le 1}
$$
for infinitely many $n$, then we may argue exactly
as in Case~I of \cite[p.~252]{BerDense} (with $B=M$). We conclude that 
$M=\Omega^{r_i}_{a_i}$, which contradicts minimality of $M$.
Hence, it remains to consider the case when every element $x$ in a sufficiently small
neighbourhood of 0 in $M-M$ is of the form $\pi(y)$ for some $y\in V_i^{\le 1}$.

We take an ergodic element $D \in \Sigma_i$ and 
$M' \subset M$ a $D$-minimal subset. Then for every $x \in M'$, we have $D^{n_s}(x) \to x$ 
along a subsequence $n_k$. In particular, it follows that for some $n\in\mathbb{N}$,
\begin{equation}\label{eqn:phin}
D^n (x)- x = \pi(y)
\end{equation}
with $y\in V_i^{\le 1}$.
It follows from (\ref{eq:hyperbolic}) that there exists an element $E \in \Sigma_i$ such that
$$
E^m(y)\to 0\quad\hbox{as $m\to\infty$.}
$$
Passing to a subsequence, we also obtain
$$
E^{m_s}(x)\to z\in M.
$$
Hence, applying $E^{m_s}$ to both sides of \eqref{eqn:phin}, we conclude that $D^n (z)=z$,
and by \cite[Lemma~II.15]{BerSolenoid}, $z$ is a torsion element. Since $M$ is $\Sigma_i$-minimal,
it must consist of torsion elements.
\end{proof}

\begin{proof}[Proof of Proposition~\ref{thm:torsion}]
We denote by $\Omega[\ell]$ the subset of elements whose
order divides $\ell$. We note that $\Omega[\ell]$ is finite (see \cite[Lemma~II.13]{BerSolenoid}) and 
$\Sigma$-invariant. 

Let $M$ be a $\Sigma$-minimal set contained in a given closed $\Sigma$-invariant set.
We use induction on $k$. The case when $k=1$ is handled by Lemma \ref{lem:torsioni}.
In particular, it follows that $p_1(M)$ contains a torsion element of order $\ell_1$, where
$p_1:\Omega\to \Omega_{a_1}^{r_1}$ denotes the projection map. Let
$$
N=\left\{y\in \prod_{i=2}^k \Omega_{a_i}^{r_i}:\, (x,y)\in M\;\hbox{for some $x\in \Omega_{a_1}^{r_1}[\ell_1]$}\right\}.
$$
Since $N$ is non-empty,  invariant, and closed, it follows from the inductive hypothesis
that $N$ contains a point $y$ such that $\ell_2 y=0$ for some $\ell_2\in \N$.
Then $M$ contains $(x,y)$ for some $x\in
\Omega_{a_1}^{r_1}[\ell_1]$, and $(x,y)\in\Omega[\ell_1\ell_2]$.
\end{proof}

From Proposition \ref{thm:torsion}, we also deduce

\begin{lem}\label{cor:fixed}
Let $M$ be a closed $\Sigma$-invariant set. Then
there exist $s\in \mathbb{N}$ and a torsion point $r\in M$ such that $A^s(r)=B^s(r)=r$.
\end{lem}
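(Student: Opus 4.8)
The plan is to bootstrap from Proposition~\ref{thm:torsion} by first reducing to a single torsion point and then upgrading ``eventual periodicity'' to ``genuine periodicity'' using finiteness of torsion subgroups of fixed order. By Proposition~\ref{thm:torsion}, the closed $\Sigma$-invariant set $M$ contains a torsion element $t$, say of order $\ell$. Its $\Sigma$-orbit $\Sigma t$ then lies entirely inside $\Omega[\ell]$, which is finite by \cite[Lemma~II.13]{BerSolenoid} and $\Sigma$-invariant. So $\overline{\Sigma t}=\Sigma t$ is a finite $\Sigma$-invariant subset of $M$ consisting of torsion points, all of order dividing $\ell$.

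Next I would exploit the finiteness of $\Sigma t$ together with the commutativity of the semigroup. Consider the map $A$ acting on the finite set $\Sigma t$. Some forward iterate $A^{a}(t)$ lies on a cycle of $A$, i.e.\ there is $a_0\ge 0$ and a period $a_1\ge 1$ with $A^{a_0+a_1}(t)=A^{a_0}(t)$; replacing $t$ by $t':=A^{a_0}(t)\in\Sigma t\subseteq M$, we get $A^{a_1}(t')=t'$. Now apply the same argument to $B$ acting on the finite set $\Sigma t'$: there are $b_0\ge 0$, $b_1\ge 1$ with $B^{b_0+b_1}(t')=B^{b_0}(t')$; set $t'':=B^{b_0}(t')$, so $B^{b_1}(t'')=t''$. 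The point is that $t''$ still satisfies $A^{a_1}(t'')=t''$: since $A$ and $B$ commute, $A^{a_1}(t'')=A^{a_1}B^{b_0}(t')=B^{b_0}A^{a_1}(t')=B^{b_0}(t')=t''$. Finally take $s=\operatorname{lcm}(a_1,b_1)$ and $r=t''\in M$; then $A^s(r)=r$ and $B^s(r)=r$, and $r$ is a torsion point as an element of the finite set $\Omega[\ell]$.

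The only mild subtlety—and the point I would state carefully—is that $A$ and $B$ genuinely act as maps on the \emph{finite} set $\Sigma t$, so that the elementary ``eventually periodic implies a point lands on a cycle'' argument applies: this is because $A(\Sigma t)\subseteq \Sigma t$ and $B(\Sigma t)\subseteq \Sigma t$ by definition of $\Sigma$ as the semigroup generated by $A$ and $B$, and $\Sigma t$ is finite since it sits in $\Omega[\ell]$. There is no real obstacle here; the whole lemma is a soft consequence of Proposition~\ref{thm:torsion} plus the finiteness of bounded-order torsion and the commutation relation $AB=BA$. I would therefore present it as a short corollary-style argument rather than a substantial proof, with the commutation step $A^{a_1}B^{b_0}=B^{b_0}A^{a_1}$ highlighted as the one place where commutativity of $\Sigma$ is essential to preserve the first periodicity while establishing the second.
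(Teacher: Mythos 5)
Your argument is correct, and it reaches the conclusion by a slightly different mechanism than the paper. The paper also starts from Proposition~\ref{thm:torsion} and the finiteness of $\Omega[\ell]$, but then picks a \emph{minimal} finite $\Sigma$-invariant set $N\subset M$ of torsion elements: since $A(N)\subset N$ and $B(N)\subset N$ are again $\Sigma$-invariant (by commutativity), minimality forces $A(N)=N$ and $B(N)=N$, so $A|_N$ and $B|_N$ are permutations of a finite set and some common power $s$ is the identity on all of $N$. You instead avoid any appeal to minimality and run an eventual-periodicity (pigeonhole) argument on the finite forward orbit $\Sigma t\subset\Omega[\ell]\cap M$: first land on an $A$-cycle, then on a $B$-cycle, using $A^{a_1}B^{b_0}=B^{b_0}A^{a_1}$ to keep the $A$-periodicity while gaining the $B$-periodicity, and finish with $s=\operatorname{lcm}(a_1,b_1)$. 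Both routes are soft and rest on the same two inputs (Proposition~\ref{thm:torsion} and \cite[Lemma~II.13]{BerSolenoid}); the paper's version yields a whole finite set on which $A^s$ and $B^s$ act as the identity, while yours produces a single such point, which is all the lemma asserts and all that is used later. The one step worth stating explicitly in your write-up is that $A$ and $B$ are group endomorphisms of $\Omega$, so they indeed map $\Omega[\ell]$ into itself, which is what makes $\Sigma t$ finite.
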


\begin{proof}
We recall that by \cite[Lemma~II.13]{BerSolenoid} the set $\Omega[\ell]$, is finite.
Since this set is clearly $\Sigma$-invariant, it follows from Proposition \ref{thm:torsion}
that $M$ contains a finite $\Sigma$-invariant set $N$ consisting of torsion elements. 
We pick $N$ to be a minimal set with these properties. 
Since $A(N)\subset N$ is also $\Sigma$-invariant, we conclude that $A(N)=N$
and similarly $B(N)=N$. Then it follows that $A|_N$ and $B|_N$ are bijections
of the finite set $N$, and there exists $s\in \mathbb{N}$ such that 
$(A|_N)^s=(B|_N)^s=id$, which implies the lemma.
\end{proof}

\section{Approximation of long line segments}
\label{sec:curve}

Let $\Upsilon'$ denote the set of accumulation points of $\Upsilon:=\pi(\Sigma v)=\Sigma\omega$. 
The aim of this section is to show that one can approximate projections of 
arbitrary long line segments by points in $\Upsilon'$. For this
we recall that $\Upsilon'$ contains a torsion element $r$ (see Proposition \ref{thm:torsion})
and apply the action of $\Sigma$ to a sequence $(x^{(s)})_{s\ge 1}$ contained in $\Upsilon$
and converging to $r$. To produce nontrivial limits, one needs additional
properties of the sequence $x^{(s)}$ that are provided by the following two lemmas.

\begin{lem}
\label{lem:V>1}
For any point $x \in \Upsilon'$ there exists a sequence $x_s\in \Upsilon$ converging to $x$
such that 
$$
x^{(s)}=\pi(y^{(s)})+x\quad\hbox{with $y^{(s)}\notin V^{\le 1}$, $y^{(s)}\to 0$,}
$$
where $V^{\le 1}:=\prod_{i=1}^k\prod_{j=1}^{h_i}V^{\le 1}_{ij}$.
\end{lem}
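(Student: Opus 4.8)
The plan is to analyse what happens at a point $x\in\Upsilon'$ depending on whether the required sequence exists, and to rule out the bad case using the dynamics established in Section~\ref{sec:torsion}. Since $x$ is an accumulation point of $\Upsilon=\pi(\Sigma v)$, there is certainly \emph{some} sequence $x^{(s)}=\pi(z^{(s)})\in\Upsilon$ with $x^{(s)}\to x$, and writing $x^{(s)}=\pi(y^{(s)})+x$ we get representatives $y^{(s)}\in\tilde\Omega$ with $y^{(s)}\to 0$ (after lifting the convergence to a small Euclidean/$p$-adic neighbourhood of $0$ in $\tilde\Omega$ and reducing mod the lattice $\prod_i\Z[1/a_i]^{r_i}$). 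The only thing that can go wrong is that \emph{every} such sequence eventually lies in $V^{\le 1}$; so I would assume, for contradiction, that for all large $s$ one has $y^{(s)}\in V^{\le 1}=\prod_{i,j}V^{\le 1}_{ij}$, i.e. near $x$ the accumulation structure of $\Upsilon$ is carried entirely by the ``non-expanding'' directions.

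Next I would exploit hyperbolicity. By \eqref{eq:hyperbolic} (a consequence of assumption (c) via Lemma \ref{l:div}), for every $i,j,l$ there is $D\in\Sigma_i$ with $|\lambda_{il}(D)|_{p_{ij}}\neq 1$; so on $V^{\le 1}_{ij}$, which is spanned by the $v_{ijl}$ with $|\lambda_{il}(D)|_{p_{ij}}\le 1$ for all $D$, we can find a single element $E\in\Sigma$ (a suitable product of the $D$'s over all the finitely many indices) that acts as a strict contraction on $V^{\le 1}$: $E^m|_{V^{\le 1}}\to 0$. This is exactly the mechanism used in the proof of Lemma \ref{lem:torsioni}. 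Applying $E^{m_s}$ along a subsequence to $x^{(s)}=\pi(y^{(s)})+x$: the term $\pi(E^{m_s}y^{(s)})$ tends to $0$ because $y^{(s)}\to 0$ and stays in the contracted subspace (one chooses $m_s\to\infty$ slowly enough relative to the rate $y^{(s)}\to 0$), so $E^{m_s}x^{(s)}\to \lim E^{m_s}x$ along a further subsequence, and since $x^{(s)}\in\Upsilon$ and $\Upsilon'$ is closed and $\Sigma$-invariant, the upshot is that the cluster point of $E^{m_s}x$ lies in $\overline{\Sigma\omega}$ and, more importantly, that near $x$ the set $\Upsilon'$ would have to be invariant in a way incompatible with $x$ being an accumulation point of the full orbit --- more precisely, one shows $\Upsilon'$ would be locally contained in an affine translate of $\pi(V^{\le 1})$, a proper closed subgroup coset, and iterating $\Sigma$ this would force $\overline{\Sigma\omega}$ into a proper closed subgroup, contradicting \eqref{eq:Pi} together with the assumption that some $\xi_i\notin\Q(\lambda_i,\mu_i)$ (which guarantees $\overline{\Sigma\omega}$ is not contained in any such subgroup). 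Alternatively, and I think more cleanly, one mimics ``Case~I of \cite[p.~252]{BerDense}'' directly: the hypothesis $y^{(s)}\notin V^{\le 1}$ infinitely often is precisely the input Berend uses, and its failure is handled by the contraction argument just sketched.

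The main obstacle I anticipate is the bookkeeping in the mixed-place setting: $V^{\le 1}$ is a product over all archimedean and $p$-adic places $p_{ij}$, and the single contracting element $E$ must simultaneously contract every $V^{\le 1}_{ij}$; one has to check that the semigroup $\Sigma$ (not just each $\Sigma_i$) contains such an $E$, which requires that the ``expanding directions'' witnessing \eqref{eq:hyperbolic} can be combined coherently across the $i$'s --- here assumption (c) for each $i$ suffices because one can take $E=(E_1,\dots,E_k)$ with $E_i\in\Sigma_i$ chosen per-factor, but one must verify $E$ actually lies in the diagonally-generated $\Sigma$, i.e. is of the form $(A_i^nB_i^m)_i$ for common $(n,m)$; this is arranged by taking appropriate common powers. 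The second delicate point is the rate-matching ``choose $m_s\to\infty$ with $E^{m_s}y^{(s)}\to 0$'': this is routine once one has uniform contraction estimates on $V^{\le 1}$, but it must be stated. Modulo these points, the argument is a limiting argument of exactly the type promised in the introduction, and I would present it in that style.
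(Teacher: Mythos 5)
There is a genuine gap, and it is exactly at the case the lemma is really about: when every lift satisfies $y^{(s)}\in V^{\le 1}$. Your treatment of that case does not work. Applying a simultaneously contracting element $E\in\Sigma$ (which indeed exists by the product construction you describe) to $x^{(s)}=\pi(y^{(s)})+x$ only gives $E^{m_s}x^{(s)}-E^{m_s}x\to 0$, i.e.\ yet another accumulation point of $\Upsilon$; this is the mechanism used in Lemma \ref{lem:torsioni} to produce periodic points, and it yields no contradiction here. The way you propose to extract a contradiction --- that $\Upsilon'$ would be locally contained in a coset of $\pi(V^{\le 1})$, hence $\overline{\Sigma\omega}$ in a proper closed subgroup, which is then said to be impossible because $\xi_i\notin\Q(\lambda_i,\mu_i)$ ``guarantees'' no such containment --- rests on an unproved structural claim about the orbit closure; nothing in your sketch (or in the paper) establishes it, and the introduction explicitly warns that the orbit structure of $\Sigma$ on $\Omega$ is not understood, so assuming it is close to assuming the theorem. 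The correct implication, ``accumulation at $x$ only through $V^{\le 1}$ directions forces $\xi_i\in\Q(\lambda_i,\mu_i)$,'' is precisely Berend's \emph{Case~II} argument \cite[p.~253]{BerDense}, an arithmetic argument applied to the single factor $i$; that, and not Case~I (which is the opposite case, $y^{(s)}\notin V^{\le 1}$, used in Lemma \ref{lem:torsioni}), is what the paper invokes, and your proposal cites the wrong case and supplies no substitute for the arithmetic step.

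You also omit a step that is genuinely needed in the multi-factor setting: to run Berend's Case~II argument on the distinguished factor $i$ (the one with $\xi_i\notin\Q(\lambda_i,\mu_i)$) one needs the components $x_i^{(s)}=\pi_i\bigl(\lambda_i^{m(s)}\mu_i^{n(s)}\xi_i v_{i11}\bigr)$ of the approximating points to be pairwise distinct; distinctness of the full vectors $x^{(s)}$ is not enough. The paper proves this by noting that the eigenvector $v_{i11}$ is not proportional to a rational vector, so equality of two components forces $\lambda_i^{m(s_1)}\mu_i^{n(s_1)}=\lambda_i^{m(s_2)}\mu_i^{n(s_2)}$, and then multiplicative independence of $(\lambda_i,\mu_i)$ gives $(m(s_1),n(s_1))=(m(s_2),n(s_2))$, a contradiction. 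In your argument the irrationality assumption never acts on a specific factor at all, whereas it must enter through exactly this factor-wise analysis.
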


\begin{proof}
To prove the lemma we use the assumption that $\xi_i \not\in\Q(\lambda_i,\mu_i)$ for some $i=1,\dots,k.$

Let $(x^{(s)})_{s\ge 1}$ be a sequence of distinct points in $\Upsilon=\pi(\Sigma \omega)$ converging  to $x$.
We write
$$
x^{(s)}=\pi(y^{(s)})+x,
$$
where $y^{(s)}$ is a sequence of points in $\tilde \Omega$ converging to zero.
More explicitly,
$$
x^{(s)}=\pi(A^{m(s)}B^{n(s)} v)=\left( x_1^{(s)},\ldots,x_k^{(s)}\right)
$$
for $m(s),n(s)\in \mathbb{N}$,
where $x_i^{(s)}=\pi_i(A_i^{m(s)}B_i^{n(s)} \xi_i v_{i11})=\pi_i(\lambda_i^{m(s)}\mu_i^{n(s)}
\xi_i v_{i11})$. 

Recall that we have assumed that $\xi_i \notin\Q(\lambda_i,\mu_i)$ for some $i=1,\ldots,k$.
We claim that for this $i$ the sequence $(x_i^{(s)})_{s\ge 1}$ consists of distinct points.
Indeed, suppose that $x_i^{(s_1)}=x_i^{(s_2)}$ for some $s_1\ne s_2$. Then
$$
(\lambda_i^{m(s_1)}\mu_i^{n(s_1)}
-\lambda_i^{m(s_2)}\mu_i^{n(s_2)}) \xi_i v_{i11}\in \ker(\pi_i).
$$
Since the eigenvector $v_{i11}$ cannot be proportional to a rational vector,
we conclude that 
$$
\lambda_i^{m(s_1)}\mu_i^{n(s_1)}=\lambda_i^{m(s_2)}\mu_i^{n(s_2)},
$$
and hence $m(s_1)=m(s_2)$ and $n(s_1)=n(s_2)$ because
$(\lambda_i,\mu_i)$ is assumed to be multiplicatively independent. 
Then $x^{(s_1)}=x^{(s_2)}$, which gives a contradiction.

Now if we suppose that $y^{(s)}$ satisfies $y^{(s)}\in V^{\le 1}$ for all sufficiently large $s$,
then we can apply the argument of Case II in \cite[p.~253]{BerDense} to the sequence
$\{x_i^{(s)}\}$. This argument yields that $\xi_i \in\Q(\lambda_i,\mu_i)$, which is a contradiction.
Hence, by passing to a subsequence, we can arrange that $y^{(s)}\notin V^{\le 1}$, as required.
\end{proof}  

Given a sequence $(y^{(s)})_{s\ge 1}$ as above, we denote by $\mathcal{I}$ the set of indices
$(i,j,l)$ such that $y^{(s)}_{ijl}\ne 0$.

\begin{lem}\label{lem:indep}
In Lemma \ref{lem:V>1}, we can pick a sequence $(y^{(s)})_{s\ge 1}$  
so that for some $D\in \Sigma$,
\begin{enumerate}
\item[(i)]
$|\lambda_{il}(D)|_{p_{ij}}>1$ for all $(i,j,l) \in \mathcal{I}$,
\item[(ii)]
$\lambda_{i_1l_1}(D) \ne \lambda_{i_2l_2}(D)$
for all $(i_1,j_1,l_1),(i_2,j_2,l_2)
  \in \mathcal{I}$ with $(i_1,l_1)\ne (i_2,l_2)$.
\end{enumerate}
\end{lem}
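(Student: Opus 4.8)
The plan is to start from a sequence $(y^{(s)})_{s\ge 1}$ as produced by Lemma \ref{lem:V>1} and refine it, using a pigeonhole/compactness argument, so that the index set $\mathcal I$ stabilises and has a controlled shape; then to exhibit a single $D\in\Sigma$ witnessing conditions (i) and (ii). First I would note that $\mathcal I$ may a priori depend on $s$, so by passing to a subsequence I may assume $\mathcal I$ is independent of $s$ (there are only finitely many possible index sets). Since $y^{(s)}\notin V^{\le 1}$, the set $\mathcal I$ contains at least one triple $(i,j,l)$ with $v_{ijl}\notin V^{\le 1}_{ij}$, i.e.\ with $|\lambda_{il}(E)|_{p_{ij}}>1$ for some $E\in\Sigma_i$; the real obstacle is that $\mathcal I$ could also contain "bad" triples lying in $V^{\le 1}_{ij}$, and a single $D$ cannot make those expand. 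So the key step is a \emph{further refinement of the sequence} to remove such triples: I would argue that if $(i,j,l)\in\mathcal I$ with $v_{ijl}\in V^{\le 1}_{ij}$, one can split $y^{(s)}=y'^{(s)}+y''^{(s)}$ into its $V^{\le 1}$ and $V^{>1}$ parts, apply an element $E\in\Sigma_i$ (on the $i$-th factor) that contracts the $V^{\le 1}_{ij}$-directions — such $E$ exists by \eqref{eq:hyperbolic} exactly as in the proof of Lemma \ref{lem:torsioni} — while the $V^{>1}$ part either stays bounded away from $0$ (giving a new nontrivial limit with strictly smaller $\mathcal I$) or itself shrinks, in which case iterate. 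After finitely many such steps we reach a sequence whose $\mathcal I$ consists only of triples with $|\lambda_{il}(E)|_{p_{ij}}>1$ for some $E=E_{ijl}\in\Sigma_i$.

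Next I would produce the single element $D$ for condition (i). For each fixed pair $(i,j)$ and each $l$ with $(i,j,l)\in\mathcal I$, we have some $E_{ijl}\in\Sigma_i$ with $|\lambda_{il}(E_{ijl})|_{p_{ij}}>1$. Taking a high power of a product of these over all relevant $l$ — and using that on the remaining directions the eigenvalues have $p_{ij}$-absolute value $\le 1$, so multiplying only helps — one gets $E_{ij}\in\Sigma_i$ with $|\lambda_{il}(E_{ij})|_{p_{ij}}>1$ simultaneously for all $l$ with $(i,j,l)\in\mathcal I$. Then, since for each $i$ the finitely many valuations $p_{ij}$ are handled by commuting elements of the abelian semigroup $\Sigma_i$, another product gives $D_i\in\Sigma_i$ with $|\lambda_{il}(D_i)|_{p_{ij}}>1$ for all $(i,j,l)\in\mathcal I$; and $D=(D_1,\dots,D_k)\in\Sigma$ works for (i). Here one must check $D$ can be taken of the form $A^mB^n$, which is automatic since $\Sigma=\{A^mB^n\}$ and $\Sigma_i$ is its $i$-th projection; if the componentwise exponents don't match one passes to a common power of the generators.

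Finally, for condition (ii), the point is to perturb $D$ so that the eigenvalues $\lambda_{i_1l_1}(D)$ with $(i_1,l_1)$ distinct become pairwise distinct, while preserving (i). Within a single factor $i_1=i_2=i$, distinctness of $\lambda_{il_1}(D)$ and $\lambda_{il_2}(D)$ for $l_1\ne l_2$ already holds for every $D\in\Sigma_i$ by \eqref{eq:neq} (indeed by Lemma \ref{lem:irreducible}). Across different factors $i_1\ne i_2$, a collision $\lambda_{i_1l_1}(A^mB^n)=\lambda_{i_2l_2}(A^mB^n)$ means $\lambda_{i_1l_1}^m\mu_{i_1l_1}^n=\lambda_{i_2l_2}^m\mu_{i_2l_2}^n$; since $\lambda_{il}$ and $\mu_{il}$ are Galois conjugates of $\lambda_i,\mu_i$, assumption (b) of Theorem \ref{thm:main} rules out such an identity for all $(m,n)$ except on a proper subgroup of $\mathbb Z^2$, hence for the generic $D$ among those satisfying (i) the distinctness (ii) holds. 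Concretely I would start from the $D$ of the previous step and replace it by $DA^{m_0}B^{n_0}$ for a suitable $(m_0,n_0)$: condition (i) is an open condition stable under multiplying by further semigroup elements, and condition (ii) is violated only on the union of finitely many proper subgroups of the exponent lattice, which cannot cover the whole semigroup, so a good choice exists. I expect the removal of the "bad" $V^{\le 1}$-directions from $\mathcal I$ (the iterated refinement of the sequence) to be the main obstacle, since it requires care that the process terminates and that a genuinely nontrivial limit survives at each stage.
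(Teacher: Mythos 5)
There is a genuine gap, and it sits exactly where you predicted the difficulty would \emph{not} be: in producing a single $D\in\Sigma$ satisfying (i). Knowing that each direction $(i,j,l)\in\mathcal I$ is expanded by \emph{some} element of $\Sigma_i$ does not let you combine these by ``products and high powers'' into one element expanding all of them: since $\log|\lambda_{il}(A_i^mB_i^n)|_{p_{ij}}=m\log|\lambda_{il}(A_i)|_{p_{ij}}+n\log|\lambda_{il}(B_i)|_{p_{ij}}$, you need one pair $(m,n)$ of nonnegative integers making finitely many linear forms simultaneously positive, and this can fail even when each form is positive somewhere (think of the forms $m-2n$ and $-2m+n$). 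Your parenthetical claim that ``on the remaining directions the eigenvalues have absolute value $\le 1$, so multiplying only helps'' is not true: an element of the semigroup can expand one direction of $\mathcal I$ while contracting another. Moreover $D$ must have the \emph{same} exponent pair $(m,n)$ in every factor $i$, which ``passing to a common power'' does not arrange. The paper avoids all of this by choosing, among the sequences provided by Lemma \ref{lem:V>1}, one whose index set $\mathcal I$ is \emph{minimal}, and then invoking \cite[Lemma~II.7]{BerSolenoid}: for a minimal $\mathcal I$, every $D\in\Sigma$ either satisfies $|\lambda_{il}(D)|_{p_{ij}}>1$ for all $(i,j,l)\in\mathcal I$ or $|\lambda_{il}(D)|_{p_{ij}}\le 1$ for all of them; combined with the hyperbolicity assumption this shows that already $A$ or $B$ satisfies (i), and then $A^nB$ satisfies (i) for all large $n$. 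Your iterated ``removal of bad $V^{\le 1}$-directions'' is not needed in that approach, and as described it is itself problematic: applying elements of $\Sigma$ to the points changes the limit point $x$ fixed in Lemma \ref{lem:V>1} (it is $\Sigma$-fixed only in the later application, not in the lemma), and termination with a nontrivial limit is not justified.

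Your treatment of (ii) is also too coarse. For a pair $a_1\ne a_2$ the bad set is not a proper subgroup of the exponent lattice but, within the one-parameter family $D=A^nB$, a coset of one, and it can be an infinite arithmetic progression when $\lambda_{a_1}/\lambda_{a_2}$ is a root of unity; in addition, finitely many proper subgroups \emph{can} cover $\Z^2$, so ``a good choice exists'' does not follow formally from your counting. The paper handles precisely this situation by introducing the equivalence relation $a_1\sim a_2$ iff $\lambda_{a_1}^n=\lambda_{a_2}^n$ for some $n$, using assumption (b) together with \eqref{eq:neq} to get $\mu_{a_1}\ne\mu_{a_2}$ within a class, and then taking $n$ a sufficiently large multiple of $m_0$; across classes a coincidence can occur for at most one value of $n$ per pair. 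So your outline for (ii) contains the right ingredients (assumption (b) and \eqref{eq:neq}) but needs this finer case analysis, while for (i) the essential idea you are missing is the minimality of $\mathcal I$ combined with Berend's dichotomy.
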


\begin{proof}
The proof relies on the independence property (b) of the main theorem of the pairs $(\lambda_i,\mu_i)$.
%In fact, this is the only place where we need the independence property.

We pick a sequence $(y^{(s)})_{s\ge 1}$ as in Lemma \ref{lem:V>1} with a minimal set of indices
$\mathcal{I}$. Then by \cite[Lemma~II.7]{BerSolenoid},
for any $D \in \Sigma$ we have either $|\lambda_{il}(D)|_{p_{ij}}>1$ for all $(i,j,l) \in
\mathcal{I}$ or $|\lambda_{il}(D)|_{p_{ij}} \le 1$ for all $(i,j,l) \in \mathcal{I}.$
Hence, it follows from the hyperbolicity assumption (c) of the main theorem that
either $A$ or $B$ satisfies (i). 
Without loss of generality, we assume that $A$ satisfies (i). 
Then there exists $n_0 \in \N$ such that $A^n B$ satisfies (i) for  all $n \ge
n_0$. Now we show that $D:=A^n B$ for some $n \ge n_0$ satisfies (ii),
which is equivalent to showing that
\begin{equation}\label{eq:check}
\lambda_{a_1}^n\mu_{a_1}\ne \lambda_{a_2}^n\mu_{a_2}
\end{equation}
for all $a_1\ne a_2$ in the set $\mathcal{J}=\{(i,l): 1\le i\le k,\; 1\le l\le  r_i\}$.
We say that $a_1\sim a_2$ if there exists $n\in\mathbb{N}$ such that $\lambda_{a_1}^n=\lambda_{a_2}^n$.
It is easy to check that this is an equivalence relation and there exists
$m_0$ such that $\lambda_{a_1}^{m_0}=\lambda_{a_2}^{m_0}$ for all $a_1$ and $a_2$ in the same equivalence class.

It follows from the independence assumption (b) of the main theorem and (\ref{eq:neq}) that
$$
(\lambda^u_{a_1},\mu^u_{a_1})\ne (\lambda^u_{a_2},\mu^u_{a_2})\quad\hbox{for all $a_1\ne a_2$ and $u\in\mathbb{N}$}.
$$
Thus, if $a_1$ and $a_2$ belong to the same equivalence class, then 
$\mu^{m_0}_{a_1}\ne \mu^{m_0}_{a_2}$ and, in particular, $\mu_{a_1}\ne \mu_{a_2}$.
This implies that (\ref{eq:check}) holds 
within the same equivalence class when $n$ is a multiple of $m_0$.

Now we consider the case when $a_1\ne a_2$ belong to different equivalence classes.
If (\ref{eq:check}) fails for $n'$ and $n''$, then
$$
\lambda_{a_1}^{n'-n''}= \lambda_{a_2}^{n'-n''},
$$
and $n'=n''$. Hence, in this case (\ref{eq:check}) may fail only for finitely many $n$'s.
Hence, if we take $n$ to be a sufficiently large multiple of $m_0$, then 
both (i) and (ii) hold.
\end{proof}

We apply the argument of \cite[Sec.~II.3]{BerSolenoid} to 
the sequence $(y^{(s)})_{s\ge 1}$ and $D\in \Sigma$ constructed in Lemma \ref{lem:indep}.
This yields the following lemma (cf. \cite[Lemma~II.11]{BerSolenoid}).

We say that a set $Y$ is an $\epsilon$-net for the set $X$ if for every $x\in X$ there exists
$y\in Y$ within distance $\epsilon$ from $x$.

\begin{lem}\label{lem:longsegments}
Assume that $\Upsilon'$ contains a torsion point $r$ fixed by $\Sigma$. Then
there exist $D\in\Sigma$, a prime $p$, $\mathcal{J}\subset \{(i,j,l)\in \mathcal{I}:\, p_{ij}=p\}$,
$c_b\ne 0$ with $b \in \mathcal{J}$ in a finite extension of $\mathbb{Q}_p$, $u\in \tilde\Omega$
and $t_m$ satisfying
\begin{align}\label{eq:t_m}
t_m \left(\max_{b\in \mathcal{J}} |\lambda_{b}(D)|_p\right)^m \to \infty\quad\hbox{when $p = \infty$,}\\
p^{-t_m}  \left(\max_{b\in \mathcal{J}} |\lambda_{b}(D)|_p\right)^m  \to \infty\nonumber
\quad\hbox{when $p<\infty$},
\end{align}
such that if we define
$$
v^{m,t}:=D^m (u) +t \sum_{b \in \mathcal{J}} \lambda_b(D)^m c_{b} v_{b},
$$
where $t\in [0,t_m]$ when $p=\infty$, and $t \in p^{t_m}\Z_{p}$ when $p<\infty$,
then 
$v^{m,t}\in\Omega$ and 
for every $\epsilon>0$ and $m>m(\epsilon)$, the set $\pi^{-1}(\Upsilon-r)$ forms an $\epsilon$-net
for $\{v^{m,t}\}$.
\end{lem}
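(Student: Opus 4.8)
The plan is to mimic the amplification argument from \cite[Sec.~II.3]{BerSolenoid}, adapting it to the product setting $\Omega=\prod_i\Omega_{a_i}^{r_i}$ and the semigroup action of $\Sigma$. Start with the sequence $(y^{(s)})_{s\ge 1}$ and the element $D\in\Sigma$ produced by Lemma \ref{lem:indep}, so that $y^{(s)}\to 0$, $y^{(s)}\notin V^{\le 1}$, the index set $\mathcal I=\{(i,j,l):y^{(s)}_{ijl}\ne 0\}$ satisfies $|\lambda_{il}(D)|_{p_{ij}}>1$ for all $(i,j,l)\in\mathcal I$, and the eigenvalues $\lambda_{il}(D)$ are pairwise distinct (as $(i,l)$ varies) on $\mathcal I$. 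Since $\Upsilon'$ contains a torsion point $r$ fixed by $\Sigma$, replacing $\Upsilon$ by $\Upsilon-r$ we may work near $0$: the points $x^{(s)}-r=\pi(y^{(s)})+(x-r)$ still lie (up to the shift by the fixed $x-r$, which we absorb) in a set to which $\Sigma$ applies, and $\pi(y^{(s)})$ expands under $D$.

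First I would localise at a single prime. Among the finitely many primes $p_{ij}$ occurring for $(i,j,l)\in\mathcal I$, pick the prime $p$ for which the growth rate $\max\{|\lambda_{il}(D)|_{p_{ij}}: p_{ij}=p\}$ is largest, and let $\mathcal J=\{(i,j,l)\in\mathcal I: p_{ij}=p\}$; write $b=(i,j,l)$ and $\lambda_b(D)=\lambda_{il}(D)$, $v_b=v_{ijl}$. Decompose $y^{(s)}=y^{(s)}_{\mathcal J}+y^{(s)}_{\mathrm{rest}}$ in the eigenbasis, with $y^{(s)}_{\mathcal J}=\sum_{b\in\mathcal J}c_b^{(s)}v_b$, $c_b^{(s)}\ne 0$. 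Passing to a subsequence, normalise: choose scalars $\rho_s$ so that $\rho_s y^{(s)}_{\mathcal J}$ stays bounded and bounded away from $0$ in $V_{\mathcal J}:=\bigoplus_{b\in\mathcal J}L_{p}v_b$, and so that $\rho_s y^{(s)}_{\mathrm{rest}}\to 0$ — this uses that $p$ was chosen to maximise the growth rate, exactly as in the solenoid argument. After a further subsequence, $\rho_s y^{(s)}_{\mathcal J}\to \sum_{b\in\mathcal J}c_b v_b=:w$ with all $c_b\ne 0$. Now choose integers $m=m(s)$ so that $D^{m}$ expands $\rho_s^{-1}$ by just the right amount: define $t=t(s)$ implicitly by requiring $t\cdot(\max_b|\lambda_b(D)|_p)^{m}\asymp \rho_s^{-1}$ (archimedean case) respectively $p^{-t}(\max_b|\lambda_b(D)|_p)^{m}\asymp\rho_s^{-1}$ ($p$-adic), which forces the divergence \eqref{eq:t_m}. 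Applying $D^{m}$ to $x^{(s)}-r=\pi(y^{(s)})$ and using $D^m\pi(y^{(s)}_{\mathrm{rest}})\to 0$, $D^m(r)=r$, we get that $D^{m}(\Upsilon-r)$ accumulates on $\pi$-images of $\{u+t\sum_b\lambda_b(D)^m c_b v_b\}$ for a suitable $u\in\tilde\Omega$ (the limit of the non-expanding/error part), uniformly over $t$ in the stated range. Spelling out that this gives an $\epsilon$-net for $\{v^{m,t}:t\in[0,t_m]\}$ (resp. $t\in p^{t_m}\Z_p$) once $m>m(\epsilon)$ is the content of \cite[Lemma~II.11]{BerSolenoid}, applied coordinate-by-coordinate; the fact that $v^{m,t}$ projects into $\Omega$ (not just $\tilde\Omega$) follows because it is a limit of $D^m$-translates of points of $\Upsilon\subset\Omega$.

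The main obstacle is the normalisation/subsequence bookkeeping in the product space: in \cite{BerSolenoid} one has a single solenoid, whereas here $\mathcal J$ may spread over several factors $i$ and several places $j$, so one must check that a \emph{single} scalar normalisation $\rho_s$ simultaneously keeps the $\mathcal J$-part nondegenerate and kills the rest. This is where property (ii) of Lemma \ref{lem:indep} (distinctness of the $\lambda_b(D)$) and the maximality in the choice of $p$ are essential: distinctness prevents the leading terms from cancelling after amplification, and maximality guarantees that the components outside $\mathcal J$ grow strictly slower and hence vanish after the normalisation. Once those two points are in place, the remainder is a routine transcription of the interval/ball-filling estimate from \cite[Sec.~II.3]{BerSolenoid}.
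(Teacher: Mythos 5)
Your proposal follows essentially the same route as the paper: the paper's entire proof of this lemma is the observation that one applies the amplification argument of \cite[Sec.~II.3]{BerSolenoid} (cf.\ Lemma~II.11 there) to the sequence $(y^{(s)})_{s\ge1}$ and the element $D\in\Sigma$ supplied by Lemma~\ref{lem:indep}, which is exactly your plan, and the flexibility in the statement ($\mathcal J$ a subset, $u$ arbitrary, only $c_b\ne0$ required) absorbs the bookkeeping you describe. The one caveat is that your specific selection of $p$ by maximal growth rate together with a single scalar normalisation $\rho_s$ is not quite the right criterion when the components of $y^{(s)}$ at different places have very disparate sizes (the other expanded components need not vanish at the chosen scale); the standard first-passage-time choice in Berend's argument, after passing to a subsequence in $s$, is what makes this step watertight, but this is a repair within the same approach rather than a different one.
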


\section{Proof of the main theorem}\label{sec:proof}

As in the previous section,  $\Upsilon=\{\pi(A^mB^nv):\, m,n\in\mathbb{N}\}$,
and $\Upsilon'$ is the set of limit points of $\Upsilon$.

We first assume that $\Upsilon'$ contains a torsion point $r$ fixed by $\Sigma$ and
apply Lemma \ref{lem:longsegments}.
Let
$$
\lambda:=\max_{b\in \mathcal{J}} |\lambda_{b}(D)|_p\quad
\hbox{and}\quad \mathcal{K}:=\{b\in \mathcal{J}:|\lambda_b(D)|_p=\lambda\}.
$$
We take a sequence $t_m'<t_m$ such that 
\begin{equation}\label{eq:e1}
t'_m \lambda^m\to \infty\quad\hbox{and}\quad t'_m\left(\max_{b\in \mathcal{J}\backslash \mathcal{K}} 
|\lambda_{b}(D)|_p\right)^m \to 0
\end{equation}
when $p = \infty$, and
\begin{equation}\label{eq:e2}
p^{-t'_m}\lambda^m\to\infty\quad\hbox{and}\quad  p^{-t'_m}\left(\max_{b\in \mathcal{J}\backslash \mathcal{K}}
  |\lambda_{b}(D)|_p\right)^m  \to 0
\end{equation}
when $p<\infty$. Let
$$
w^{m,t}=D^m(u) +t \sum_{b \in \mathcal{K}} \lambda_b(D)^m c_{b} v_{b}
$$
where $t\in [0,t'_m]$ when $p=\infty$, and $t \in p^{t'_m}\Z_{p}$ when $p<\infty$.
It follows from (\ref{eq:e1}) and (\ref{eq:e2}) that
for every $\epsilon>0$ and $m>m(\epsilon)$, $\{v^{m,t}\}$ forms an $\epsilon$-net for 
$\{w^{m,t}\}$. This shows that we may assume that in Lemma \ref{lem:longsegments} 
$|\lambda_b(D)|_p=\lambda$ for all $b\in \mathcal{J}$.
We write $\lambda_b(D)=\lambda\omega_b$ where $|\omega_b|_p=1$.

We claim that there exists $1\le m_0\le |\mathcal{J}|$ such that
\begin{equation}\label{eq:cm}
c(m_0):=\sum_{b\in\mathcal{J}} \omega_{b}^{m_0} c_b\ne 0.
\end{equation}
Indeed, suppose that 
$c(m)=0$ for all $1\le m\le |\mathcal{J}|$.
This implies that the $(|\mathcal{J}|\times|\mathcal{J}|)$-matrix
$$
\left(\lambda_{b}(D)^{m}\right)_{b\in\mathcal{J},\, 1\le m\le |\mathcal{J}|}
$$
is degenerate. However, it follows from Lemma \ref{lem:indep}(ii) that
$\lambda_{b_1}(D)\ne \lambda_{b_2}(D)$ for $b_1\ne b_2$, which is a contradiction.
Hence,  (\ref{eq:cm}) holds.

We claim that there exists a subsequence $m_i\to \infty$ such that $\omega_b^{m_i}\to \omega_b^{m_0}$ for all
$b\in\mathcal{J}$. To show this, we consider the rotation
on the compact abelean group $\{|z|_p=1\}^{\mathcal{J}}$ defined by the vector 
$(\omega_b)_{b\in\mathcal{J}}$. Since the orbit closure of the identity is minimal,
it follows that $(\omega_b^{m})_{b\in\mathcal{J}}\to (1,\ldots, 1)$ along a subsequence,
and the claim follows.

We consider the cases $p=\infty$ and $p<\infty$ separately. Suppose that $p=\infty$.
We observe that by (\ref{eq:projj}),
\begin{align*}
\Pi(v^{m,t})=z_m+ \sum_{b\in\mathcal{J}} \{t\lambda^m \omega_{b}^m c_b\}_\infty 
=z_m+ \{t\lambda^m c(m)\}_\infty\mod 1,
\end{align*}
where $z_m=\Pi(D^m(u))$.
Since 
$$
t_{m_i}\lambda^{m_i}\to \infty\quad\hbox{and}\quad c(m_i)\to c(m_0)\ne 0,
$$
we conclude that for all sufficiently large $i$,
$$
\Pi\left(\{v^{m_i,t}\}_{0\le t\le t_{m_i}}\right)=\R/\Z.
$$
On the other hand, for every $\epsilon>0$ and $i>i(\epsilon)$,
the set $\pi^{-1}(\Upsilon-r)$  forms an $\epsilon$-net for $\{v^{m_i,t}\}_{0\le t\le t_{m_i}}$.
Therefore, since $\Pi$ is continuous, it follows that $\Pi(\Upsilon-r)$ is dense in 
$\R/\Z$, which completes the proof of the theorem.

Now suppose that $p<\infty$. In this case, $\lambda=p^{-n}$, and
$$
\Pi(v^{m,t})=z_m+ \sum_{b\in\mathcal{J}} \{t p^{-mn} \omega_{b}^m c_b\}_p
=z_m+ \{t p^{mn} c(m)\}_p\mod 1.
$$
For all sufficiently large $i$, we have $|c(m_i)|_p=|c(m_0)|=p^l$.
Thus,
\begin{align*}
\{\Pi(v^{m_i,t})\}_{t \in p^{t_{m_i}}\Z_p}&=z_{m_i}+\{p^{t_{m_i}-nm_i+l} \Z_p\}_p\\
&= z_{m_i}+\left\{\sum_{j=t_{m_i}-nm_i+l}^{-1} c_j p^j : 0\le c_j\le p-1\right\}\mod 1,
\end{align*}
and this set is $p^{t_{m_i}-nm_i+l}$-dense in $\R/\Z$. Since
$p^{-t_{m_i}+n m_i} \to \infty$, for all $\epsilon>0$ and $i>i(\epsilon)$
this set forms an $\epsilon$-net for $\R/\Z$.
On the other hand, for every $\epsilon>0$ and sufficiently large $i$,
the set $\pi^{-1}(\Upsilon-r)$  forms an $\epsilon$-net for $\{v^{m_i,t}\}_{t \in p^{t_{m_i}}\Z_p}$.
Hence, we conclude that $\Pi(\Upsilon-r)$ is dense in $\R/\Z$.

This completes the proof of the theorem 
under the assumption that $\Upsilon'$ contains a torsion point $r$ fixed by $\Sigma$.
To prove the theorem in general, we observe that
by Lemma \ref{cor:fixed} there exist $s\in\mathbb{N}$ and a torsion point $r\in \Upsilon'$
such that $A^s(r)=B^s(r)=r$. Then there exist $0\le m_0,n_0\le s-1$ such that
$r$ is an accumulation point for $\{\pi(A^{ms+m_0}B^{ns+n_0}v):\, m,n\in\mathbb{N}\}$.
Applying the above argument to the semigroup $\Sigma'=\left<A^s,B^s\right>$
and the vector $v'=A^{m_0}B^{m_0}v$, we establish the theorem in general.

\end{document}